\definecolor{linkred}{RGB}{199,21,133}
\definecolor{linkblue}{RGB}{16, 78, 139}
	\titlespacing{\section}{0pt}{12pt}{0pt}
	\titlespacing{\subsection}{0pt}{6pt}{0pt}
\long\def\@footnotetext#1{%
\H@@footnotetext{%
\ifHy@nesting 
\hyper@@anchor{\@currentHref}{#1}%
\else 
\Hy@raisedlink{\hyper@@anchor{\@currentHref}{\relax}}#1%
\fi 
}}
\def\@footnotemark{%
\leavevmode 
\ifhmode\edef\@x@sf{\the\spacefactor}\nobreak\fi 
\H@refstepcounter{Hfootnote}%
\hyper@makecurrent{Hfootnote}%
\hyper@linkstart{link}{\@currentHref}%
\@makefnmark 
\hyper@linkend 
\ifhmode\spacefactor\@x@sf\fi 
\relax 
}%
\renewcommand*\@footnotemark{%
\leavevmode 
\ifhmode 
\edef\@x@sf{\the\spacefactor}%
\FN@mf@check 
\nobreak 
\fi 
\H@refstepcounter{Hfootnote}%
\hyper@makecurrent{Hfootnote}%
\hyper@linkstart{link}{\@currentHref}%
\@makefnmark 
\hyper@linkend 
\ifFN@pp@towrite 
\FN@pp@writetemp 
\FN@pp@towritefalse 
\fi 
\FN@mf@prepare 
\ifhmode\spacefactor\@x@sf\fi 
\relax%
}%
\newtheorem{theorem}{Theorem}[section]
\newtheorem{lemma}[theorem]{Lemma}
\newtheorem{proposition}[theorem]{Proposition}
\newtheorem*{mainthm1}{Theorem~\ref{thm:qch}}
\newtheorem*{mainthm2}{Theorem~\ref{thm:main2}}
\theoremstyle{definition}
\newtheorem{defn}[theorem]{Definition}
\newtheorem{eg}[theorem]{Example}
\theoremstyle{remark}
\renewcommand{\phi}{\varphi}
\newcommand{\ssm}{\smallsetminus}
\newcommand{\diam}{{\rm diam}}
\newcommand{\area}{{\rm area}}
\newcommand{\injrad}{{\rm injrad}}
\newcommand{\arcsinh}{{\rm arcsinh}}
\newcommand{\arccosh}{{\rm arccosh}}
\newcommand{\be}{ \begin{equation} }
\newcommand{\ee}{ \end{equation} }
\newcommand{\co}{\colon\thinspace}
\long\def\symbolfootnote[#1]#2{\begingroup%
\def\thefootnote{\fnsymbol{footnote}}\footnote[#1]{#2}\endgroup}
\def\blfootnote{\xdef\@thefnmark{}\@footnotetext}
\date{\today}
\begin{document}

{\Large \bfseries Bounded geometry with no bounded pants decomposition}

{\large 
Ara Basmajian\symbolfootnote[1]{Supported by a grant from the Simons foundation (359956, A.B.)}, Hugo Parlier\symbolfootnote[2]{Supported by the Luxembourg National Research Fund OPEN grant O19/13865598}, and Nicholas G.~Vlamis\symbolfootnote[3]{Supported in part by PSC-CUNY Award \# 63524-00 51.
\vspace{.1cm} \\
{\em 2020 Mathematics Subject Classification:} Primary: 32G15, 53C22, 57K20. Secondary: 30F60. \\
{\em Key words and phrases:} pants decompositions, infinite-type surfaces, quasiconformal homogeneity, hyperbolic surfaces.} 
\vspace{0.5cm}

{\bf Abstract.} 
We construct a quasiconformally homogeneous hyperbolic Riemann surface---other than the hyperbolic plane---that does not admit a bounded pants decomposition.
Also, given a connected orientable topological surface of infinite type with compact boundary components, we construct a complete hyperbolic metric on the surface that has bounded geometry but does not admit a bounded pants decomposition. 

\vspace{.5cm}

\section{Introduction}
A Riemann surface \( X \) is \emph{quasiconformally homogenous}, or \emph{QCH}, if there exists a real number \( K \) such that \( K \geq 1 \) and such that for any two points \( x,y \in X \) there exists a \( K \)-quasiconformal homeomorphism \( f \co X \to X \) with \( f(x) = y \). 
This notion was introduced in \cite{BCMT} for hyperbolic manifolds of arbitrary dimension.
In the same article, the authors gave a characterization of all QCH hyperbolic manifolds in dimension greater than two, which relied on rigidity results that do not exist for Riemann surfaces.
It is an open question to characterize QCH Riemann surfaces.

Building on results in the literature, the first and third author in \cite{BasmajianVlamis} showed that this problem can be split into four topological cases.
Moreover, they gave a characterization in one of the cases: every two-ended infinite-genus QCH Riemann surface with no planar ends is quasiconformally equivalent to a (geometric) regular cover of a closed surface. 
A key component of the proof was to show that every two-ended infinite-genus QCH Riemann surface admits a \emph{bounded pants decomposition}, that is, a pants decomposition in which (non-cuspidal) cuffs have lengths that are universally bounded from above and below.
The motivation of this article is the following question: \textit{Does every hyperbolic QCH Riemann surface, other than the hyperbolic plane, admit such a pants decomposition?}
Our main theorem answers this question in the negative:

\begin{mainthm1}\label{thm:main1}
There exists a non-simply connected quasiconformally homogenous hyperbolic Riemann surface without a bounded pants decomposition.
\end{mainthm1}

Intuitively, every non-simply connected QCH hyperbolic Riemann surface must have its injectivity radius uniformly bounded from above and below (this is shown in \cite[Theorem~1.1]{BCMT}). 
In particular, every non-simply connected QCH hyperbolic Riemann surface has bounded geometry.
A hyperbolic Riemann surface \( X \) has \emph{bounded geometry} if the injectivity radius at each point of \( X \) outside the area 2 horoball neighborhoods of the cusps is universally bounded from above and below.

It is clear that a Riemann surface with a bounded pants decomposition has bounded geometry; however, the converse is false---Kinjo gives a counterexample in \cite{Kinjo1}. 
(However, Kinjo shows that every hyperbolic Riemann surface with bounded geometry does admit a bounded hexagonal decomposition \cite{Kinjo2}.)
In fact, the Riemann surface \( R \) obtained by removing the lattice \( \mathbb Z \oplus i\mathbb Z \) from \( \mathbb C \) is such a counterexample (which is distinct from Kinjo's example, but similar in nature). 
The point is that the thick part of \( R \) is quasi-isometric to \( \mathbb R^2 \) with the standard Euclidean metric and there are curves in any pants decomposition of \( R \) that must bound (topological) disks in \( \mathbb C \) containing an increasing number of lattice points.
The isoperimetric inequality now guarantees that the lengths of these curves tend to infinity. 

Both this example and Kinjo's are based on planar surfaces where every simple closed curve is separating. It is not so clear how to create examples of non-planar Riemann surfaces with bounded geometry that do not admit a bounded pants decomposition.
Our second theorem shows this is always possible:

\begin{mainthm2}\label{thm:main2}
Every infinite-type orientable connected topological surface with compact boundary components admits a complete hyperbolic metric with bounded geometry and such that every pants decomposition has unbounded cuff lengths.
\end{mainthm2}

The fact that, for any topological surface of infinite type, one can find a hyperbolic metric that does not admit a bounded pants decomposition is not entirely surprising in light of the study of the Bers constant for finite-type surfaces. This constant, first studied by Bers \cite{Bers} and quantified by Buser and others \cite{BuserBook,Balacheff-Parlier, Balacheff-Parlier-Sabourau, Parlier}, is an upper bound on a shortest pants decomposition of a hyperbolic surface in terms of its (finite) topology. The lower bounds for this constant are known to grow in terms of topology, but the examples with genus generally use very thin surfaces, so such a limiting surface would not have bounded geometry. 

Also note that if a surface has a bounded pants decomposition then its length spectrum is indiscrete, and hence any hyperbolic surface with a discrete length spectrum cannot have a bounded pants decomposition.
In \cite{Basmajian-Kim}, Basmajian--Kim give a general strategy for constructing hyperbolic surfaces with discrete length spectrum and prescribed topology. 
However, the surfaces we construct in Theorem~\ref{thm:main2} necessarily do not have a discrete length spectrum. 

To establish the above theorems, it is thus necessary to find a new strategy. The key is to mimic the example \( R \) described above, that is, to find surfaces with an isoperimetric profile that will guarantee the non-existence of bounded pants decompositions. Most of our constructions begin with a ``plane with holes": a surface similar to $R$ but with boundary geodesics of a fixed length, which are then pasted to obtain different topological types and in ways which ensure the non-bounded pants decomposition property.

\subsection*{Outline}
After a preliminary section, Section \ref{sec:prelim}, where we set notation and terminology, we study in Section~\ref{sec:pants} the geometry of pairs of pants, and in particular their intrinsic diameters, which will be a key tool in our proofs. In Section~\ref{sec:planes} we establish the basis for our constructions and establish Theorem~\ref{thm:qch}. In the final section, Section \ref{sec:final}, we handle all remaining topological types, establishing Theorem~\ref{thm:main2}. 
\subsection*{Acknowledgements}
We are grateful to Edward Taylor and Richard Canary for asking us whether Theorem~\ref{thm:qch} was true, and more generally if bounded geometry implies existence of a bounded pants decomposition. 
The authors acknowledge support from U.S. National Science Foundation grants DMS 1107452, 1107263, 1107367 ``RNMS: Geometric structures And Representation varieties” (the GEAR Network).

\section{Preliminaries}\label{sec:prelim}
All our surfaces will be orientable. A \emph{hyperbolic surface with totally geodesic boundary} is a connected complete metric space in which every point has a neighborhood isometric to an open subset of a closed geodesic half-plane in the hyperoblic plane. 
Given a hyperbolic surface \( X \), the length of a curve \( \alpha \) in \( X \) is denoted by \( \ell_X(\alpha) \). 
For $x\in X$, $\injrad_X(x)$ will denote the injectivity radius of $X$ in $x$, that is, the supremum of the radii of isometrically embedded hyperbolic disks centered at \( x \). 
The \emph{systole} of \( X \), denoted \( \mathrm{sys}(X) \), is the infimum of the lengths of closed geodesics in \( X \). 
The surface \( X \) is said to have \emph{bounded geometry} if there exists a constant \( M > 1 \) such that \( \mathrm{sys}(X) \geq 1/M \) and \( \injrad_X(x) \leq M \) for every \( x \in X \). 
Equivalently, in the case where \( X \) has no boundary, there are positive universal upper and lower bounds on the injectivity radius of all points of \( X \) in the complement of well chosen neighborhoods of the cusps of \( X \). 

A geodesic pants decomposition of a (hyperbolic) surface is a collection of disjoint simple closed curves such that the complementary regions are all three-holed spheres (pairs of pants) with finite area. For finite-type surfaces this is equivalent to being a maximal collection of disjoint simple closed geodesics that is maximal with respect to inclusion, but for infinite-type surfaces, an extra condition on local finiteness is necessary. A (geodesic) pants decomposition \( \mathcal P \) of \( X \) is \emph{bounded} if there exists a constant \( B > 1 \) such that every closed geodesic \( \gamma \in \mathcal P \) satisfies \( 1/B \leq \ell_X(\gamma) \leq B \). 
Note that not every hyperbolic surface has a geodesic pants decomposition, e.g., if it contains a funnel or half-plane.



In general, the arguments that follow are mostly self contained.
For basic facts and formulas in hyperbolic geometry, we make appropriate references to \cite{BuserBook}.
Outside of basic hyperbolic geometry and surface topology, we will make reference to topological ends and the classification of surfaces based on the end spaces of surfaces, which we refer the reader to Richards \cite{Richards}. 

\section{Bounds on the diameter of pairs of pants}\label{sec:pants}
One of the keys to providing examples of hyperbolic surfaces with no bounded pants decompositions is to show that there are pairs of pants that span long portions of the surface, that is, there are pants with large diameter. 
In order to do this, we need to relate the lengths of the cuffs of a pair of pants to its diameter, and in particular give upper bounds on the diameter as a function of its cuff lengths.
We will also have need to work with pairs of pants with cusps, in which case we will focus on the diameters of the thick part. 
The goal of this section is to establish these upper bounds.
We first treat the case of no cusps (Lemma~\ref{lem:diameter}) and then the cusped case (Lemma~\ref{lem:diametercusps}).
The arguments are similar in nature, but the latter is more technical.

\begin{lemma}
\label{lem:diameter}
Let $l$ and $L$ be positive real numbers satisfying $l<L$. 
There exists a constant \( K = K(l) \) such that any pair of pants $P$ with cuff lengths between $l$ and $L$ has diameter less than \(\frac{3}{2} L + K \).\end{lemma}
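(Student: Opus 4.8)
The plan is to bound separately (i) the distance from an arbitrary point of $P$ to $\partial P$ by a constant depending only on $l$, and (ii) the diameter of $\partial P$ measured inside $P$ by $\tfrac32 L$ plus a constant depending only on $l$; the bound on $\diam(P)$ then follows from the triangle inequality. Write $\gamma_1,\gamma_2,\gamma_3$ for the cuffs, with lengths $\ell_1,\ell_2,\ell_3\in[l,L]$.

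For (i), recall that $\area(P)=2\pi$ by Gauss--Bonnet. Fix $p$ in the interior of $P$, put $\iota=\injrad_P(p)\le\dist_P(p,\partial P)$, and note that since hyperbolic disks of radius $<\iota$ embed in $P$ we have $2\pi(\cosh\iota-1)\le2\pi$, i.e.\ $\iota\le\arccosh 2$. If $\dist_P(p,\partial P)=\iota$ we are done. Otherwise the obstruction to enlarging the embedded disk at $p$ is topological, so there is an essential geodesic loop $\delta$ based at $p$ with $\ell_P(\delta)=2\iota\le 2\arccosh 2$. Since $2\arccosh 2<4\arcsinh 1$, the geodesic representative of the free homotopy class of $\delta$ is simple (the classical lower bound for the length of a self-intersecting closed geodesic, applied in the closed double of $P$), and as the only simple closed geodesics in a pair of pants are its cuffs, $\delta$ is freely homotopic to $\gamma_m^{\,n}$ for some $m$ and $n\ge1$. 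Lifting to $\Hbb$, the hyperbolic isometry $g$ realizing $\delta$ has translation length $n\ell_m\ge l$ and axis $A_g$ lying over $\gamma_m$, and the displacement identity $\sinh\!\big(\tfrac12\dist_\Hbb(\tp,g\tp)\big)=\cosh\!\big(\dist_\Hbb(\tp,A_g)\big)\sinh(n\ell_m/2)$ together with $\dist_\Hbb(\tp,g\tp)=\ell_P(\delta)\le 2\arccosh 2$ gives $\cosh\dist_\Hbb(\tp,A_g)\le\sqrt3/\sinh(l/2)$. Hence $\dist_P(p,\partial P)\le\dist_P(p,\gamma_m)\le\dist_\Hbb(\tp,A_g)\le\rho_0(l)$ for a constant $\rho_0(l)$ depending only on $l$.

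For (ii), the orthogeodesic $\sigma$ joining two cuffs $\gamma_a,\gamma_b$ satisfies the standard identity
\[
\cosh\ell_P(\sigma)=\frac{\cosh(\ell_c/2)+\cosh(\ell_a/2)\cosh(\ell_b/2)}{\sinh(\ell_a/2)\sinh(\ell_b/2)},\qquad\{a,b,c\}=\{1,2,3\},
\]
so $\cosh\ell_P(\sigma)\le\cosh(L/2)/\sinh^2(l/2)+\coth^2(l/2)$, and, using $e^t<2\cosh t$, this yields $\ell_P(\sigma)<\tfrac L2+C_1(l)$ with $C_1(l)=\log\!\big(2/\sinh^2(l/2)+2\coth^2(l/2)\big)$, again depending only on $l$. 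For $x\in\gamma_a$ and $y\in\gamma_b$ with $a\ne b$, concatenate the shorter arc of $\gamma_a$ from $x$ to the foot of $\sigma$ on $\gamma_a$ (length $\le\ell_a/2\le L/2$), then $\sigma$, then the shorter arc of $\gamma_b$ from the foot of $\sigma$ to $y$ (length $\le\ell_b/2\le L/2$): this gives $\dist_P(x,y)<\tfrac32 L+C_1(l)$, and if $a=b$ then $\dist_P(x,y)\le\ell_a/2$ already suffices.

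Finally, for $p,q\in P$ pick $p',q'\in\partial P$ with $\dist_P(p,p')=\dist_P(p,\partial P)$ and $\dist_P(q,q')=\dist_P(q,\partial P)$; then $\dist_P(p,q)\le\dist_P(p,p')+\dist_P(p',q')+\dist_P(q',q)<\tfrac32 L+C_1(l)+2\rho_0(l)$, so one may take $K(l)=C_1(l)+2\rho_0(l)$. The step I expect to be the real obstacle is (i): a priori $p$ might sit deep inside a long thin ``neck'' of $P$, and excluding this is precisely what forces the appeal to the classification of closed geodesics on a pair of pants, the universal lower bound on the length of a self-intersecting geodesic, and the collar-type displacement estimate in $\Hbb$; one should also keep in mind that $\rho_0(l)$ and $C_1(l)$ necessarily blow up as $l\to0$, which is why $K$ is allowed to depend on $l$.
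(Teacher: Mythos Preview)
Your proof is correct but takes a different route from the paper in step~(i). The paper first cuts $P$ along its three seam orthogeodesics into two isometric right-angled hexagons $H$; since $\area(H)=\pi$, the same embedded-disk argument you use (with $\pi$ in place of $2\pi$) gives $\dist(p,\partial H)<\arccosh(3/2)<1$ for every $p$---a \emph{universal} constant, independent of $l$. The paper then bounds $\ell(\partial H)$ via the hexagon formula (exactly your formula for $\sigma$), obtaining $\diam(P)\le\tfrac12\ell(\partial H)+2$. You instead bound the distance from $p$ directly to the cuffs $\partial P$; this genuinely requires the lower bound $l$ (a point midway along a thin collar is far from $\partial P$ though close to $\partial H$), and so you are forced to invoke the Yamada-type lower bound $4\arcsinh 1$ for self-intersecting closed geodesics together with the displacement identity---heavier machinery than the paper needs. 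Both approaches yield the same $\tfrac32 L+K(l)$ shape; the hexagon decomposition buys a self-contained argument with the $l$-dependence confined to the perimeter estimate, while your version is more direct in step~(ii) but pays for it with a more delicate step~(i) and a larger constant $K(l)=C_1(l)+2\rho_0(l)$.
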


\begin{proof}
We first make the observation that every right-angled hexagon has area \(\pi \) and hence there is a bound on the radius of the largest inscribed circle. It follows that the distance between any point in the hexagon and the boundary is bounded by a universal constant. Explicitly:
 for any point $p$ in the hexagon, a ball of area $r$ around $p$ is of area $2\pi(\cosh(r)-1)$, so if the ball is embedded then
$ 2\pi(\cosh(r)-1) < \pi$ and hence $r < \arccosh(3/2)<1$. 
 
We next decompose \( P \) into the union of two isometric right-angled hexagons with disjoint interiors.
We work with one of the hexagons \( H \), which contains three pairwise non-adjacent sides of lengths \( \frac {\ell_1}2, \frac {\ell_2}2, \) and \( \frac {\ell_3}2 \). Denote the length of the side opposite the side of length \( \frac {\ell_3}2 \) by \( w \). 
Using a standard formula for hyperbolic right-angled hexagons (see \cite[Theorem 2.4.1(i)]{BuserBook}), we readily deduce that \[ \cosh w \leq \frac{1}{\sinh^{2} \frac {\ell}2}
\left(\cosh \frac{L}{2}+\cosh^2 \frac{\ell}{2} \right).\]

Now using the facts (for all $x,y\geq 1$) that 
\(\arccosh \,x < \log x+ \log 2 \) and \(\log(x+y) < \log x + \log(y+1) \)
we have 
\[w< \log \cosh \frac{L}{2} + K_1(\ell)< L+ K_1(\ell),\]
where
\[K_1(\ell)=\log \left[\frac{2}{\sinh^{2} \frac {\ell}2} \right] + \log\left(\cosh^2 \frac {\ell}{2} +1 \right) +
\log 2.\]
Of course the same bound holds for the side opposite \(\frac{\ell_1}{2}\) as well as the side opposite \(\frac{\ell_2}{2}\).
 Putting the bounds for each of the sides together we see that the boundary of \( H \) has length bounded by 
 \(3 L + 3 K_1. \) 
 
 Now using the fact that any point on the pair of pants is distance at most 1 from the boundary of \( H \), we have that the diameter of 
 \( P \) is at most \( \frac{3}{2} L + K, \)
where \( K=\frac{3}{2}K_1 + 2\).
\end{proof}

We now adapt this result to the setting of pants with cusps. The goal this time will be to bound the diameter of a thick part of the pants.
To do so we remove a canonical neighborhood of each cusp; in particular, given a pair of pants \( P \) with cusps, the \emph{truncated pair of pants} associated to \( P \) is the subset \( P^t \) of \( P \) obtained by removing the area 2 horoball neighborhoods of each cusp. 
Note that these neighborhoods are always embedded and disjoint.
In fact, these are the maximal such neighborhoods, and this is the sense in which they are canonical (see \cite[Section~4.1]{BuserBook}).
Up to isometry, there is a unique pair of pants with three cusps, and its diameter can be computed explicitly.
However, the 3-cusped pair of pants will not play a role in the sequel, and so we will only consider pairs of pants with at least one totally geodesic boundary component. 

In what follows, we will frequently refer to the \emph{collar function} \( \eta \co (0,\infty) \to (0, \infty) \) defined by
$$
\eta(\ell) = \arcsinh\left(\frac{1}{\sinh(\ell/2)}\right).
$$

\begin{lemma}
\label{lem:diametercusps}
Let $l$ and $L$ be positive real numbers satisfying $l<L$ and let \( P \) be a pair of pants with either one or two cusps and totally geodesic boundary.
Then, there exists a constant \( K = K(l) \) such that if the lengths of the cuffs of \( P \) are in the interval \( [l,L] \), then \( \diam(P^t) \leq L + K \).
\end{lemma}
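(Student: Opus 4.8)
The plan is to mimic the proof of Lemma~\ref{lem:diameter}, but to deal with the cusped hexagons by replacing the cusped vertices with suitable horocyclic arcs of controlled length, so that the truncated pair of pants is again covered by a region bounded by a closed curve of length $\leq 3L + 3K_1(l) + (\text{universal})$ whose points are all within a universal distance of that curve.

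First I would decompose $P$ into two congruent \emph{right-angled hexagons with ideal vertices}: for a pair of pants with $k \in \{1,2\}$ cusps and totally geodesic cuffs of lengths $\ell_1,\ldots,\ell_{3-k}$, cutting along the three seams gives two isometric copies of a hexagon in which $k$ of the alternate sides have degenerated to ideal points and the other $3-k$ have length $\frac{\ell_i}{2}$. For such a hexagon, intersecting with the complement of the area-1 horoball at each ideal vertex (which is exactly what $P^t$ cuts down to, since each cusp's area-2 horoball splits symmetrically between the two hexagons) yields a compact right-angled polygon with $3-k$ ``short'' sides of length $\frac{\ell_i}{2}$, at most $3-k$ ``opposite'' sides whose length $w$ I need to bound, and $k$ horocyclic arcs of length exactly $1$ (the truncating horocycles have length $1$ at area $1$), together with finite seam segments joining these.

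The key estimate is the bound on the length $w$ of a side opposite a cuff of length $\ell \in [l,L]$ in such a hexagon. In the one-cusp case this is a right-angled pentagon with one ideal vertex; in the two-cusp case a right-angled quadrilateral with two ideal vertices. Using the standard trigonometric identities for these degenerate hexagons (the relevant formulas in \cite[Section~2.3 and Section~4.1]{BuserBook}, obtained as limits of the hexagon formula $\cosh w = \cosh\frac{\ell}{2} + \cosh\frac{\ell_1}{2}\cosh\frac{\ell_2}{2}$ by sending the appropriate sides to infinity while tracking the finite distance between the limiting horocycles), one gets an inequality of the shape $\cosh w \leq C(\ell)\cosh\frac{L}{2} + C'(\ell)$, hence, by the same elementary inequalities $\arccosh x < \log x + \log 2$ and $\log(x+y) < \log x + \log(y+1)$ used in Lemma~\ref{lem:diameter}, a bound $w < \tfrac{1}{2}L + K_2(\ell)$ — note I expect the leading coefficient to improve from the non-cusped case because a cusp ``absorbs'' width; even a bound $w < L + K_2(l)$ would suffice. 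The remaining finite pieces (the $\frac{\ell_i}{2}$-sides are $\leq \frac{L}{2}$, the seam segments joining the truncating horocycles to the non-degenerate sides are bounded by a universal constant since they are perpendicular feet between the relevant horocycle and geodesic, and the horocyclic arcs have length $1$) are all either $\leq L$ or universally bounded, so the boundary of the truncated hexagon $H^t$ has total length $\leq L + K_3(l)$ after collecting terms (the worst single side contributing a full $L$, or at most $L$).

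Finally I would run the covering-by-a-collar argument: every point of $P^t$ lies on one of the two truncated hexagons, and I claim every point of a truncated hexagon $H^t$ is within a universal distance of $\partial H^t$. This needs a small amount of care compared to the compact case because $H^t$ is not a hyperbolic polygon with geodesic sides (it has horocyclic sides), so the clean ``area $\pi$, hence inradius $<\arccosh(3/2)$'' argument doesn't apply verbatim. The fix is that $H^t$ sits inside the truncated pair of pants, whose thick part has area bounded (area of $P$ is $\pi$ minus the area of the two removed horoballs, so $\area(P^t) \leq \pi$), and an embedded disk of radius $r$ in $P^t$ has area $2\pi(\cosh r - 1)$, forcing $r < \arccosh(3/2) < 1$; thus every point of $P^t$ is within distance $1$ of $\partial P^t \cup (\text{truncating horocycles})$, which is contained in $\partial H^t$ up to the seams — and every point is trivially within $\frac{L}{2}+$const of a seam as well. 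Combining, $\diam(P^t) \leq \big(L + K_3(l)\big) + 2 = L + K(l)$, as desired. The main obstacle I anticipate is bookkeeping the degenerate trigonometry cleanly in both the one- and two-cusp cases and making sure the horocyclic-arc lengths and seam-to-horocycle distances are genuinely universally bounded; none of this is deep, but it is where the technicality Lemma~\ref{lem:diametercusps} alludes to actually lives.
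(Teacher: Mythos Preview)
Your plan is essentially the paper's own proof: decompose $P$ into two isometric pentagons/quadrilaterals with ideal vertices, truncate each by the area-$1$ horoball (giving horocyclic sides of length $1$), bound the remaining side lengths via the degenerate hexagon trigonometry obtained as a limit $\ell_0\to 0$ of the compact hexagon formula, use an area argument to get every point within distance $1$ of $\partial H$, and conclude $\diam(P^t)\le \tfrac12\ell(\partial H)+2$. One small slip: the area of a pair of pants is $2\pi$, not $\pi$ (so $\area(P^t)=2\pi-2k$ for $k$ cusps); the paper runs the inradius estimate on the half-hexagon $H$ with $\area(H)=\pi-k$, which cleanly gives $r<\arccosh(3/2)<1$ as in Lemma~\ref{lem:diameter}.
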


\begin{proof}
We first treat the case when $P$ has one cusp and two totally geodesics boundary components of length $\ell_1$ and $\ell_2$ in the interval $[l,L]$. 

As in Lemma~\ref{lem:diameter}, we begin by decomposing \( P \) into two right-angled pentagons with disjoint interiors, which when intersected with \( P^t \) yield two isometric right-angled hexagons with disjoint interiors.
Let \( \widetilde H \) be one of the pentagons, and \( H = \widetilde H \cap P^t \) the corresponding hexagon. 
The side of \( \partial H \) corresponding to the horocycle is not geodesic, but a curve of constant curvature of length $1$. 
Together with sides of lengths $\frac{\ell_1}{2}$ and $\frac{\ell_2}{2}$, they form alternating sides of the hexagon. 
We denote the other lengths by $x,y$ and $z$ as in Figure~\ref{fig:Hex1}.

\begin{figure}[h]
\leavevmode \SetLabels
\L(.377*0.1) $\ell_1/2$\\%
\L(.58*0.1) $\ell_2/2$\\%
\L(.58*.54) $x$\\%
\L(.42*.51) $y$\\%
\L(.5*0.03) $z$\\%
\L(.51*.74) $1$\\%
\endSetLabels
\begin{center}
\AffixLabels{\centerline{\includegraphics[width=4cm]{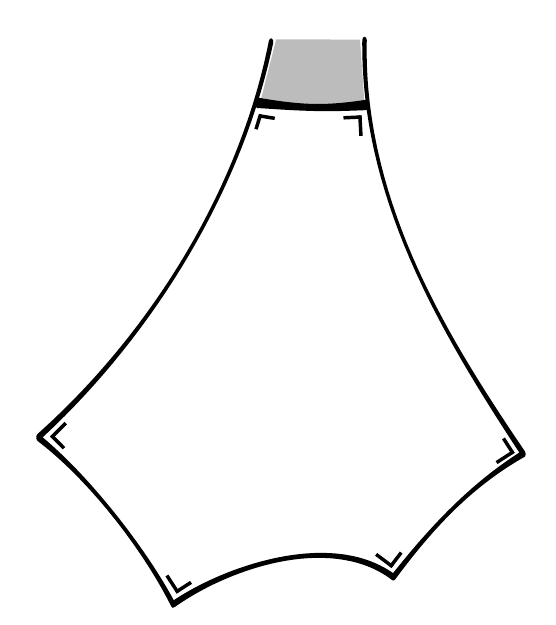}}}
\vspace{-24pt}
\end{center}
\caption{The truncated hexagon $H$ inside the pentagon \( \widetilde H \).}
\label{fig:Hex1}
\end{figure}

As in the previous lemma, we observe that any point $p \in H$ is distance at most 1 from $\partial H$. (The argument only depended on an upper bound of the area of the hexagon and here $\area(H) = \pi-1$.) 

As before, a bound on the perimeter of \( H \), denoted \( \ell(\partial H) \), yields a bound on the diameter of \( P^t \).
Indeed, any two points of \( P^t \) are distance at most 1 from \( \partial H \), and hence $\diam(P^t) \leq \frac{\ell(\partial H)}{2}+2$. In order to bound this perimeter, we proceed to estimate $x,y$ and $z$. 

The pentagon \( H \) can be further decomposed into two Lambert quadrilaterals with disjoint interiors by taking the perpendicular from the side of \( H \) of length \( z \) to the cusp.
Using a standard formula for Lambert quadrilaterals (see \cite[Theorem 2.3.1(i)]{BuserBook}), we have 
$$
z = \eta(\ell_1) + \eta(\ell_2) \leq 2\, \eta(\ell).
$$
The goal is now to show that there is a bound on both $x$ and $y$ of the form $\frac{L}{2} + \Delta$, where $\Delta$ only depends on $\ell$. 
There are certainly many ways of doing this, including writing down an explicit formula for $x$ and $y$ that depend on $\ell_1$ and $\ell_2$, and then studying the resulting function. 
To simplify things, we instead view $P$ as the geometric limit of a pair of pants where one of the cuff lengths goes to $0$, which will allow us to use a standard trigonometric formula.

Let \( \ell_0 \) be a positive real number, and let $P_{\ell_0}$ be a pair of pants with cuff lengths $\ell_0, \ell_1$ and $\ell_2$. 
As before, decompose \( P_{\ell_0} \) into two isometric right-angled hexagons with disjoint interiors; let \( H_{\ell_0} \) be one of these hexagons.
By the collar lemma (see \cite[Theorem 4.1.1]{BuserBook}), the cuff of length $\ell_0$ admits an embedded neighborhood of width \( \eta(\ell_0) \) in \( P_{\ell_0} \) that is disjoint from the other cuffs. 
Let \( P_{\ell_0}^t \) be the result of removing the \( \eta(\ell_0) \)-neighborhood around the cuff of length $\ell_0$ from \( P_{\ell_0} \), and let \( H^t_{\ell_0} = P^t_{\ell_0} \cap H_{\ell_0} \). 

Keeping with our notation, let \( x_0 \) and \( y_0 \) denote the lengths of the sides in \( H^t_{\ell_0} \) opposite the sides of length \( \ell_1/2 \) and \( \ell_2/2 \), respectively.
The area of \( \eta(\ell_0) \)-neighborhood in \( P_{\ell_0} \) is \( \ell_0\sinh(1/\ell_0) \), which limits to 2 as \( \ell_0 \) tends to 0. 
In particular, as \( \ell_0 \to 0 \), the pair of pants \( P_{\ell_0} \) limits to the pair of pants \( P \), and the \( \eta(\ell_0) \)-neighborhood around the cuff of length \( \ell_0 \) limits to the area 2 horoball neighborhood of the cusp in \( P \). 
It follows that \( x_0 \to x \) and \( y_0 \to y \) as \( \ell_0 \to 0 \).

The side of \( H_{\ell_0} \) containing the side of \( H^t_{\ell_0} \) of length \( x_0 \) has length \( x_0 + \eta(\ell_0) \).
It follows that
\begin{eqnarray*}
\cosh\left(x_0 + \eta(\ell_0) \right) &=& \frac{\cosh(\ell_1/2) + \cosh(\ell_0/2) \cosh(\ell_2/2)}{\sinh(\ell_0/2) \sinh(\ell_2/2)}\\
&\leq & \frac{\cosh(L/2) + \cosh(\ell_0/2) \cosh(\ell/2)}{\sinh(\ell_0/2) \sinh(\ell/2)}
\end{eqnarray*}
where the equality is an immediate application of a standard formula for hexagons (see \cite[Theorem~2.4.1(i)]{BuserBook}), and where the inequality readily follows from the fact that the hyperbolic cotangent function is monotonically decreasing on the positive real line.
Hence, 
$$
x_0 \leq \arccosh\left( \frac{\cosh(L/2) + \cosh(\ell_0/2) \cosh(l/2)}{\sinh(\ell_0/2) \sinh(l/2)} \right) -\eta(\ell_0).
$$
Now $\arccosh(t) = \log(t+\sqrt{t^2-1}) < \log(2 t)$ and $\arcsinh(t) = \log(t+\sqrt{t^2+1}) > \log(2 t)$ and thus
\begin{align*}
x_0 	&< \log\left( \frac{1}{\sinh(\ell_0/2)} \left( \frac{\cosh(L/2)}{\sinh(l/2)}+ \cosh(\ell_0/2) \coth(l/2)\right) \right) - \log\left( \frac{1}{\sinh(\ell_0/2)}\right) \\
	&= \log\left( \frac{\cosh(L/2)}{\sinh(l/2)}+ \cosh(\ell_0/2) \coth(l/2)\right) \\
	&\leq \log\left( \frac{\cosh(L/2)+ \cosh(\ell_0/2) \cosh(L/2)}{\sinh(l/2)}\right) \\
	&= \log\left( {\cosh(L/2)}\right) + \log\left(\frac{1+ \cosh(\ell_0/2)}{\sinh(\ell/2)}\right) \\
	&\leq \frac L2 +\log\left(\frac{1+ \cosh(\ell_0/2)} {\sinh(\ell/2)}\right)
\end{align*}
where the third inequality uses the fact that \( l \leq L \) and the fifth inequality uses the fact that \( \log(\cosh t) < t \) whenever \( t> 0 \). 
By continuity, we have
\begin{align*}
x	&= \lim_{\ell_0\to 0} x_0 \\
	&\leq \lim_{\ell_0 \to 0} \left(\frac L2 +\log\left(\frac{1+ \cosh(\ell_0/2)}{\sinh(\ell/2)}\right) \right) \\
	&= \frac L2 + \log\left(\frac2{\sinh(\ell/2)}\right)
\end{align*}

The same argument applies to $y$. As $\ell_1$ and $\ell_2$ are bounded above by $L$, we obtain an upper bound on the perimeter of $H$:
$$
\ell(\partial H) < x+y+z+\ell_1/2+\ell_2/2 +1 < 2 L + 2\, \eta(\ell)+ 2 \log\left( \frac{2}{\sinh(l/2)}\right) +1.
$$
We now take the function $K(l)$ to be 
$$
K(l) = \eta(\ell) + \, \log\left( \frac{3}{\sinh(l/2)}\right) + 2 +\frac{1}{2}
$$
and thus we obtain 
$$
\diam(P^t) \leq L + K(l)
$$
as desired. 

We now treat the case when $P$ has two cusps; the method is similar but less complicated than the one-cusp case. 
As before, we consider a truncated pair of pants, but this time there are two horocyclic neighborhoods removed. 
We denote by $\ell_1$ the length of the only geodesic boundary curve, which we assume lies between $l$ and $L$. 
We look at the corresponding truncated hexagon $H$, which this time has an axial symmetry. 
The remaining side lengths are denoted as in Figure \ref{fig:Hex2}. 

\begin{figure}[h]
\leavevmode \SetLabels
\L(.48*0.01) $\ell_1/2$\\%
\L(.57*.33) $x$\\%
\L(.41*.33) $x$\\%
\L(.46*0.58) $1$\\%
\L(.52*.58) $1$\\%
\L(.493*.86) $z$\\%
\endSetLabels
\begin{center}
\AffixLabels{\centerline{\includegraphics[width=4cm]{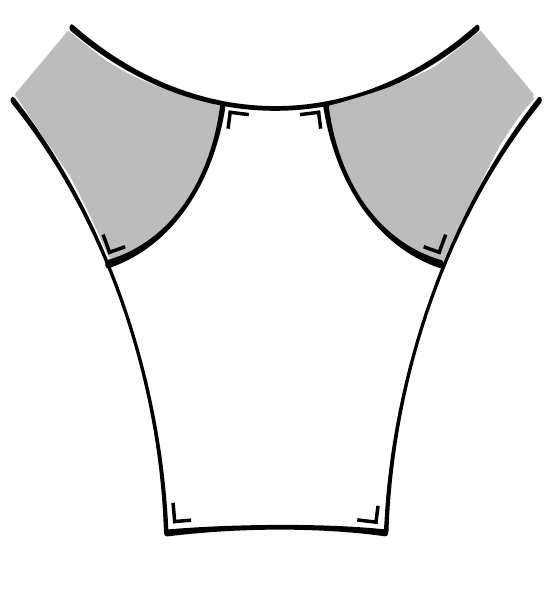}}}
\vspace{-24pt}
\end{center}
\caption{The doubly truncated hexagon $H$.}
\label{fig:Hex2}
\end{figure}

As before, we have $$
\diam(P^t) \leq \ell(\partial H)/2 + 2,
$$
and so need to bound the perimeter of \( H \).
We use the same limiting argument as before by considering a pair of pants with two boundary geodesics of length $\ell_0$ and the third of length $\ell_1$. 
We then remove the two standard collars of width $\eta(\ell_0)$ around the two geodesics of length $\ell_0$ and denote by $z_0$ the common orthogonal between the two collars (corresponding to $z$ on $H$). 
Working in the hexagon \( H_{\ell_0} \), another straightforward application of \cite[Theorem~2.4.1(i)]{BuserBook} yields
\[
\cosh\left(z_0 + 2\eta(\ell_0) \right) = \frac{\cosh(\ell_1/2) + \cosh^2(\ell_0/2)}{\sinh^2(\ell_0/2)} \leq \frac{\cosh(L/2) + \cosh^2(\ell_0/2)}{\sinh^2(\ell_0/2)},
\]
and hence
\[
z_0 \leq \arccosh\left( \frac{\cosh(L/2) + \cosh^2(\ell_0/2)}{\sinh^2(\ell_0/2)} \right) - 2 \eta(\ell_0).
\]
Using the same functional properties as before, we can deduce that
\begin{align*}
z_0	&\leq \log\left( \frac{\cosh(L/2) + \cosh^2(\ell_0/2)}{\sinh^2(\ell_0/2)} \right) - 2 \log\left( \frac{1}{\sinh(\ell_0/2)}\right)\\
	&= \log\left( \cosh(L/2) + \cosh^2(\ell_0/2)\right).
\end{align*}
Using continuity, the fact that \( \log(\cosh t) < t \) for \( t> 0 \), and a basic calculus computation, we obtain
\begin{align*}
z	&= \lim_{\ell_0\to0} z_0 \\
	&\leq \lim_{\ell_0\to0} \log\left( \cosh(L/2) + \cosh^2(\ell_0/2)\right) \\
	&= \log(\cosh(L/2) + 1) \\
	&\leq \frac L2 + 1
\end{align*}

We now need to bound the quantity $x$. 
\begin{figure}[h]
\leavevmode \SetLabels
\L(.48*0.01) $\ell_1/2$\\%
\L(.493*.86) $z$\\%
\L(.413*0.74) $1$\\%
\L(.41*.27) $x$\\%
\endSetLabels
\begin{center}
\AffixLabels{\centerline{\includegraphics[width=4cm]{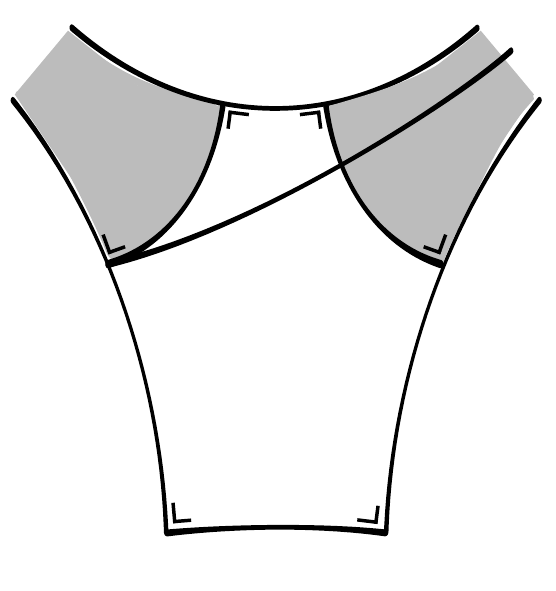}}}
\vspace{-24pt}
\end{center}
\caption{A Lambert quadrilateral with an ideal point and a right-angled ideal triangle.}
\label{fig:Hex3}
\end{figure}
As in Figure \ref{fig:Hex3}, we consider the unique orthogeodesic coming from the opposite ideal point and note the base point of this orthogeodesic is exactly the endpoint of the boundary of the horocyle. This can either be computed, or in fact easily observed in Figure \ref{fig:x1}.

\begin{figure}
\centering
\includegraphics{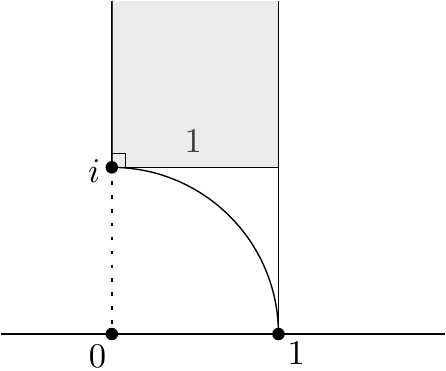}
\caption{The orthogeodesic and length $1$ boundary of the horocyle are tangent.}\label{fig:x1}
\end{figure}

Again using a standard formula for Lambert quadrilaterals (\cite[Theorem~2.3.1(i)]{BuserBook}), we have
$$
x= \eta(\ell_1)\leq \eta(\ell).
$$

We can now bound the perimeter of \( H \):
\[
\ell(\partial H) = 2x+2+z+ \frac L2 \leq L + 2\, \eta(\ell) + 3
\]
and it follows that the diameter of $P^t$ satisfies 
$$
\diam(P^t)<\frac{L}{2} + K(l)
$$
for 
$$
K(l) = \eta(\ell) + \frac32.
$$
\end{proof}

The above lemmas will be used to show that any pants decomposition of certain surfaces have pants with arbitrarily long cuff lengths. 
We make the following observation that will be crucial in how we use the above lemmas. 
Let $P$ be an embedded pair of pants in a hyperbolic surface $X$. 
Then, $P$ has an intrinsic distance, which we denote $d_P$, but also a distance coming from $X$, denoted $d_X$. 
When applicable, the same observation applies to a truncated pair of pants $P^t$. 
Hence, we have the following inequalities:

\begin{equation}\label{eq:pants}
\sup_{p,q \in P} d_X(p,q) \leq \sup_{p,q \in P} d_P(p,q) \leq \diam(P)
\end{equation}

and

\begin{equation}\label{eq:trunc_pants}
\sup_{p,q \in P^t} d_X(p,q) \leq \sup_{p,q \in P^t} d_P(p,q) \leq \diam(P^t).
\end{equation}

\section{Planes with handles and QCH Riemann surfaces}\label{sec:planes}

The main goal of this section is to give an example of a quasiconformally homogeneous surface that fails to admit a bounded pants decomposition and hence establishing Theorem~\ref{thm:qch}. 
This surface will be an example of a larger class of surfaces---\emph{planes with handles}---constructed below in Section~\ref{sec:plane_handles}.
Topologically, planes with handles are constructed from the plane by removing open disks and identifying the resulting boundary components in pairs. 
Geometrically, we will explore two cases: either the distance between any two identified pairs is universally bounded or not.
Section~\ref{sec:bounded_gluings} deals with the former case and Section~\ref{sec:unbounded_gluings} the latter. 
Before considering these cases, in Section~\ref{sec:quadrangle} we establish a topological lemma about arcs on surfaces and a general geometric proposition giving a lower bound on the cuff lengths in a pants decomposition based on the existence of \emph{quadrangular subsurfaces}.

\subsection{Planes with handles}
\label{sec:plane_handles}

Given a real number \( b \) such that \( \sinh(b) > 1 \), there exists a unique right-angled hyperbolic pentagon with adjacent sides of length \( b \) \cite[Lemma 2.3.5]{BuserBook}.
Pasting four copies together, we obtain a one-holed square \( R_b \) as shown in Figure~\ref{fig:one-square}.
If we set \( b = \arcsinh(1) \), then the pentagon degenerates into an ideal Lambert quadrilateral, and in this case, we can again paste four copies together to obtain a one-cusped square \( R_{\arcsinh(1)} \). 
In either case, given \( b \geq \arcsinh(1) \), we can paste copies of \( R_b \) together to form a grid with an action of \( \mathbb Z^2 \) by isometries; we denote the resulting surface by \( \Sigma_b \).
Observe that if \( b > \arcsinh(1) \), then \( \Sigma_b \) is one-ended and has infinitely many boundary components; if \( b = \arcsinh(1) \), then the end space of \( \Sigma_b \) is homeomorphic to a convergent sequence with its limit point; in either case, \( \Sigma_b \) is often referred to as a \emph{flute surface}. 

\begin{defn}
A hyperbolic surface obtained from \( \Sigma_b \) by identifying a subset of boundary components pairwise via orientation-reversing isometries is called a \emph{plane with handles}. 
If the distance between any pair of identified boundary components is universally bounded, then we say that the associated plane with handles has \emph{bounded gluings}. 
\end{defn}

\begin{figure}
\centering
\includegraphics{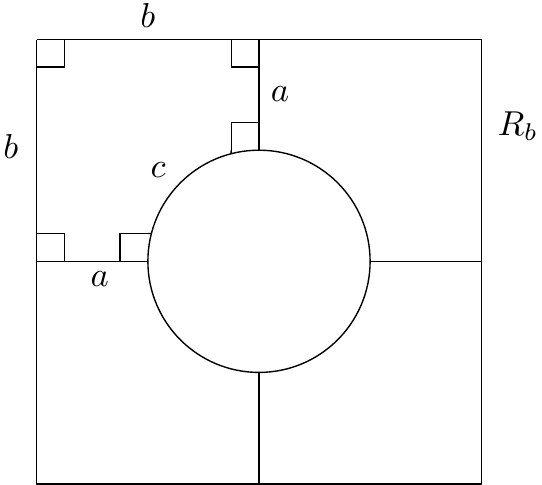}
\caption{The one-holed hyperbolic square \( R_b \) made of four right-angled pentagons.}
\label{fig:one-square}
\end{figure}

For example, \( \Sigma_b \) is itself a plane with handles with bounded gluings (since there are no gluings); this example also shows that the naming is a bit of a misnomer since \( \Sigma_b \) has no handles. 
We readily see that a plane with handles has bounded geometry. In Proposition~\ref{prop:bounded-gluing} below, we will see that a plane with handles with bounded gluings does not admit a bounded pants decomposition. 

\begin{figure}
\centering
\includegraphics[width=4.5cm]{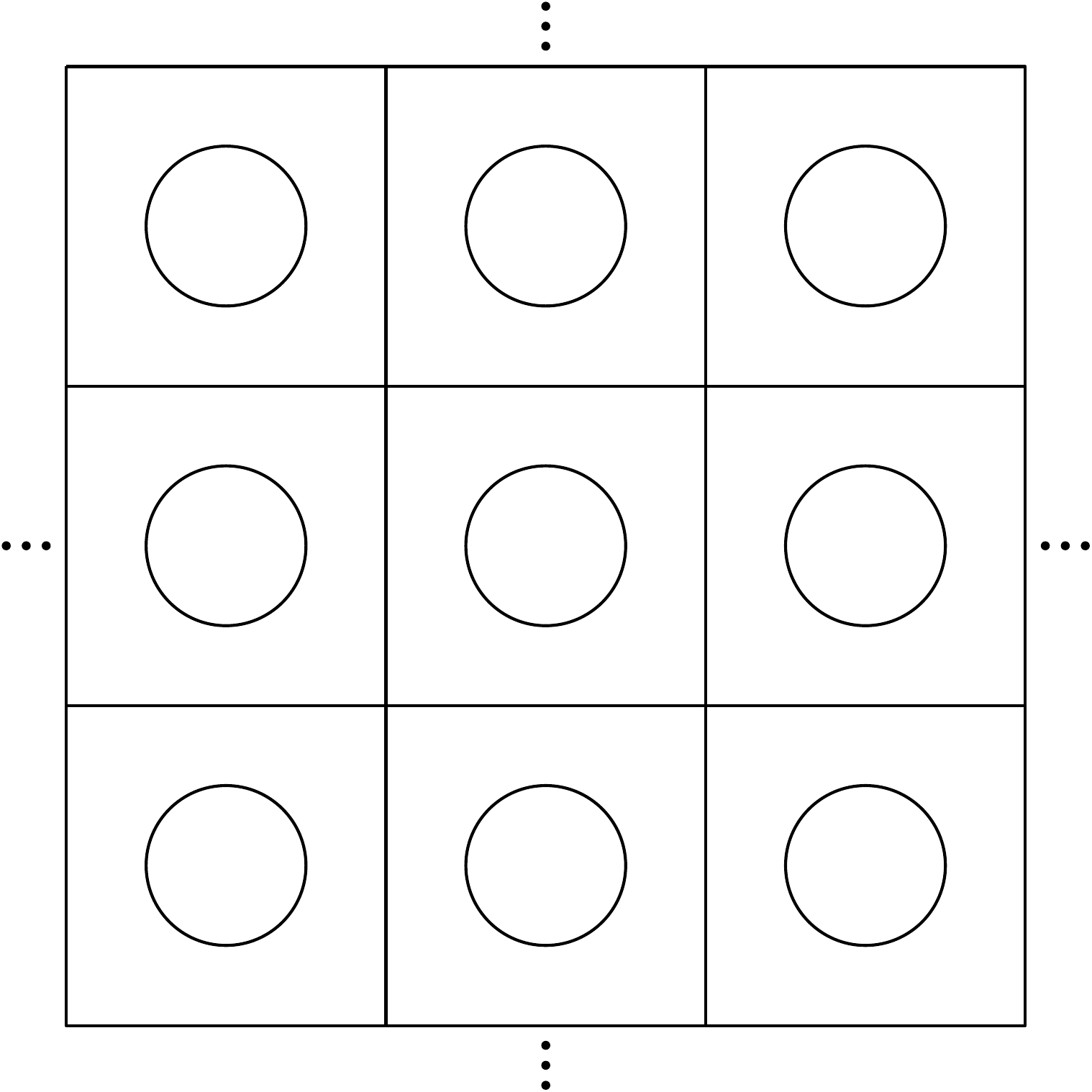}
\caption{The grid made by pasting one-holed squares.}
\label{fig:grid}
\end{figure}

In what follows, it will be helpful for us to have a fixed embedding of \( \Sigma_b \) into \( \mathbb R^2 \), so that we can refer to coordinates. 
Fix an isometric copy of \( R_b \) in \( \Sigma_b \) and choose a side of \( R_b \).
The complete geodesics obtained as concatenations of the sides of the squares are called {\it horizontal} and {\it vertical} according to their position in Figure \ref{fig:grid}.

Now fix an embedding of \( \Sigma_b \) into \( \mathbb R^2 \) such that the restriction of the embedding to every proper horizontal and vertical geodesic of \( \Sigma_b \) is an isometry and such that the origin in \( \mathbb R^2 \) sits at the intersection of a horizontal and vertical geodesic of \( \Sigma_b \). 
The action of \( \mathbb Z^2 \) on \( \mathbb R^2 \) given by \( (m,n) \cdot (x,y) = (x+2mb, y+2nb) \) restricts to an action by isometries on \( \Sigma_b \). 
Pick a boundary component of \( \Sigma_b \), label it \( \partial_{0,0} \), and set \( \partial_{i,j} = (i,j)\cdot \partial_{0,0} \); this is a labelling of all the boundary components of \( \Sigma_b \).

\subsection{Surfaces with quadrangular boundary}
\label{sec:quadrangle}

The goal in this subsection is to give a general criterion on a hyperbolic surface---in terms of the existence of specific subsurfaces---that gives a lower bound on the supremum of lengths in a pants decompositions of the surface.
The reader should keep planes with handles in mind below; however, we will work in greater generality so we may use the results later.

\begin{defn}
Let \( X \) be a hyperbolic surface with a preferred boundary component \( \delta \) and four marked points \( w,x,y,z\in \delta \).
We say \( X \) has \emph{quadrangular boundary} if each component of \( \delta \ssm \{w,x,y,z\} \) is a geodesic segment and each component of \( \partial X \ssm \delta \) is totally geodesic; we call \( \delta \) the \emph{quadrangular boundary component}.
Let \( a,b,c, \) and \( d \) be the closures of the components of \( \delta \ssm \{w,x,y,z\} \) labelled such that \( a \cap c = \varnothing \).
Then, we define the \emph{width} of \( X \) to be the quantity
\[
\omega(X) = \min\{ \ell_X(\alpha) : \alpha\colon\thinspace [0,1] \to X \text{ such that } \alpha(0) \in a, \alpha(1) \in c \text{ or } \alpha(0)\in b, \alpha(1) \in d \}.
\]
\end{defn}

Our first goal is to show that if a hyperbolic surface \( X \) contains a subsurface with quadrangular boundary of large width and long quadrangular boundary, then any pants decomposition of \( X \) must have large cuffs.
In order to do this, we need a topological lemma. 

\begin{lemma}
\label{lem:arcs}
Let \( S \) be a surface with a preferred boundary component with four marked points.
Let \( a,b,c, \) and \( d \) denote the closure of the complementary components of the four marked points in the preferred boundary component of \( S \), labelled such that \( a \cap c = \varnothing \).
Let \( \mathcal A \) be a collection of essential and pairwise-disjoint simple closed curves and simple arcs on \( S \) such that each arc has two distinct endpoints on the preferred boundary component of \( S \).
If \( \mathcal A \) contains finitely many arcs, none of which connect \( b \) and \( d \), then there exists a path in \( S \) connecting \( a \) and \( c \) that is disjoint from every arc and curve in \( \mathcal A \). 
\end{lemma}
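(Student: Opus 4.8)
The plan is to argue by a cut-and-paste / surgery argument on the surface \( S \), reducing to the case where \( \mathcal A \) consists only of arcs, and then induct on the number of arcs. First I would observe that simple closed curves in \( \mathcal A \) are harmless: since they are disjoint from all the arcs and from the preferred boundary component, removing them (i.e. passing to the component of their complement containing \( \delta \), or just ignoring them when building the path) does not obstruct a path between \( a \) and \( c \); a path avoiding the arcs can be pushed off the closed curves. So it suffices to treat \( \mathcal A = \{\alpha_1,\dots,\alpha_n\} \), a finite collection of disjoint essential simple arcs, each with both endpoints on \( \delta \), none joining \( b \) to \( d \).

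The core of the argument is an induction on \( n \). For the base case \( n = 0 \), the four arcs \( a,b,c,d \) lie on a single boundary circle \( \delta \) in the cyclic order \( a,b,c,d \) (using \( a\cap c=\varnothing \)); a small arc in the interior of \( S \) running parallel to \( \delta \) from \( a \) to \( c \) does the job. For the inductive step, I would pick an "outermost" arc \( \alpha = \alpha_i \) with respect to \( \delta \): since \( \alpha \) has both endpoints on \( \delta \), it cuts off from a regular neighborhood of \( \delta \cup \alpha \) a subarc \( \delta' \subset \delta \) such that \( \delta' \cup \alpha \) bounds a disk or, more carefully, such that \( \alpha \) together with \( \delta' \) is "innermost". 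Cutting \( S \) along \( \alpha \) yields a surface \( S' \) (possibly disconnected); the boundary component \( \delta \) is modified by cutting at the two endpoints of \( \alpha \) and regluing in two copies of \( \alpha \). The key bookkeeping point is to track what happens to \( a \) and \( c \): because no arc in \( \mathcal A \) joins \( b \) and \( d \), the endpoints of \( \alpha \) are \emph{not} one in \( b \) and one in \( d \); so after cutting, on (the component of \( S' \) carrying the relevant boundary) the images of \( a \) and \( c \) still lie on a common boundary component, and the new arc-copies do not separate \( a \) from \( c \) along that circle. This lets me re-mark four points on that boundary circle so that the hypotheses of the lemma hold for \( S' \) with the collection \( \{\alpha_1,\dots,\alpha_n\}\setminus\{\alpha_i\} \) of \( n-1 \) arcs, and apply induction to get a path from (the image of) \( a \) to (the image of) \( c \) in \( S' \) disjoint from the remaining arcs; this path descends to the desired path in \( S \) disjoint from all of \( \mathcal A \).

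The main obstacle — the step that needs genuine care rather than routine checking — is the case analysis for the endpoints of the outermost arc \( \alpha \): its two endpoints can lie on the same one of \( a,b,c,d \), or on two of them, and one must verify in each configuration that after cutting along \( \alpha \) one can still find a single boundary circle of \( S' \) containing the images of \( a \) and \( c \) with the images of \( b,d \) and the two new \( \alpha \)-copies arranged so that (after choosing the four new marked points) the "no arc joins \( b \) and \( d \)" hypothesis is preserved and \( a,c \) remain non-adjacent/separated appropriately. The hypothesis that no arc joins \( b \) and \( d \) is exactly what rules out the bad configuration in which cutting along \( \alpha \) would split \( \delta \) into two circles, one containing \( a \) and the other containing \( c \); I would make this precise by examining the cyclic order of the endpoints on \( \delta \) and the four segments. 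Once the configuration analysis is done, the surgery and the inductive invocation are formal. I would also remark that essentiality and disjointness are preserved under the cut, and that finiteness of the arc collection is what makes the induction terminate (this is where the hypothesis "finitely many arcs" is used).
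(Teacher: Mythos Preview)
Your inductive strategy of cutting one arc at a time has a genuine gap, and the difficulty is not where you locate it. The case analysis you flag --- where the endpoints of the chosen arc \(\alpha\) land among \(a,b,c,d\) --- is not the obstruction; the real problem is what happens to the \emph{other} arcs. In an orientable surface, cutting along any arc \(\alpha\) with both endpoints on \(\delta\) always splits \(\delta\) into two boundary circles \(C_1,C_2\). If some other \(\alpha_j\in\mathcal A\) has its two endpoints separated on \(\delta\) by the endpoints of \(\alpha\) (linked endpoints), then after the cut \(\alpha_j\) runs from \(C_1\) to \(C_2\); on whichever circle you declare preferred, \(\alpha_j\) no longer has both endpoints there, and the lemma's hypotheses fail for the smaller collection, so the induction does not close. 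Your ``outermost'' device does not rescue this: disjoint essential arcs on a surface with genus can have linked endpoints on \(\delta\) (e.g.\ two disjoint non-separating arcs on a one-holed torus with all four endpoints in \(b\)), so an outermost arc in your sense need not exist at all.

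The paper's proof circumvents exactly this difficulty in two moves. First it postpones the arcs with an endpoint in \(a\cup c\) to the very end (handled by a regular-neighborhood trick once a path avoiding the remaining arcs has been found), reducing to the situation where every arc has both endpoints in \(b\) or both in \(d\). Second --- and this is the key idea your plan is missing --- it cuts along an \emph{overlapping pair} \(\delta,\eta\in\mathcal A_b\) rather than a single arc: cutting along \(\delta\) splits the preferred boundary into two circles, and then \(\eta\), precisely because it overlaps \(\delta\), joins those two circles, so the second cut re-merges them into a single boundary circle on which \(a,c,d\) sit unchanged and the new ``\(b\)'' segment absorbs the cut arcs. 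This preserves the inductive setup while dropping \(|\mathcal A_b|\) by two. The base case (no overlapping pair in \(\mathcal A_b\)) is handled by taking a component of the frontier of a regular neighborhood of \(b\cup\bigcup_{\gamma\in\mathcal A_b}\gamma\), which runs from \(a\) to \(c\) and is automatically disjoint from \(\mathcal A_d\) and from the closed curves.
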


\begin{proof}
If \( \mathcal A \) has no arcs, then the boundary of \( S \) is a connected component of the complement of the union of curves in \( \mathcal A \), and hence there is a path from \( a \) to \( c \) as desired, namely \( b \).
We may therefore assume that there is at least one arc in \( \mathcal A \). 

First consider the case in which no arc in \( \mathcal A \) has an endpoint in \( a \cup c \), and hence every arc either has both endpoints in \( b \) or both endpoints in \( d \). 
Let us focus on \( b \); let \( \mathcal A_b \) be the arcs in \( \mathcal A \) with both endpoints in \( b \). 
We say two arcs \( \delta, \eta \in \mathcal A_b \) \emph{overlap} if the endpoints of \( \delta \) separate the endpoints of \( \eta \) in \( b \) (and vice versa). 
Observe that if no two arcs of \( \mathcal A_b \) overlap (and in particular if \( |\mathcal A_b| \in \{0, 1\} \)), then there is a component of a regular neighborhood of \( b \cup (\bigcup_{\delta \in \mathcal A_b} \delta) \) connecting \( a \) to \( c \). 

We will argue now by induction on the number of arcs in \( \mathcal A_b \). 
We have already completed the base cases, that is, when \( |\mathcal A_b| \in \{0,1\} \). 
Let \( k \in \mathbb N \) and suppose \( |\mathcal A_b| = k +1 \).
If no two of the arcs in \( \mathcal A_b \) overlap, then as we have already argued, there exists a path from \( a \) to \( c \).
Otherwise, we can choose \( \delta, \eta \in \mathcal A_b \) that overlap. 
Let \( S_\delta \) be the surface obtained by cutting \( S \) along \( \delta \); observe that \( S_\delta \) has two boundary components.
Since \( \delta \cap \eta = \varnothing \), we may view \( \eta \) as an arc in \( S_\delta \); moreover, since the endpoints of \( \delta \) separate the endpoints of \( \eta \) in \( b \), and hence in \( \partial S \), we must have that the endpoints of \( \eta \) are on distinct components of \( \partial S_\delta \). 
It follows that the surface \( S_\delta^\eta \) obtained by cutting \( S_\delta \) along \( \eta \) has a single boundary component.

The four marked points on \( \partial S \) determine four points on \( \partial S_\delta^\eta \) whose complement has four components, three of which correspond to \( a, c, \) and \( d \) from \( \partial S \), and the other is a union of arcs from \( b \) together with \( \delta \) and \( \eta \).
We let \( \mathcal A' \) denote the set of arcs in \( S_\delta^\eta \) obtained by viewing each arc in \( \mathcal A \ssm \{\delta, \eta\} \) as an arc in \( S_\delta^\eta \), and we similarly define \( \mathcal A_b' \). 
Then, \( |\mathcal A_b'| = k-1 \), and hence, by our induction hypothesis, there exists a path connecting \( a \) to \( c \) in \( S_\delta^\eta \) disjoint from the union of the arcs in \( \mathcal A' \).
Note that this path is also a path in \( S \), which concludes the proof in the case where no arc in \( \mathcal A \) has an endpoint in \( a \cup c \).

Now, assume there are arcs in \( \mathcal A \) with endpoints in \( a \cup c \).
Let \( \mathcal A' \) be the subset of \( \mathcal A \) consisting of arcs with no endpoints in \( a\cup c \), and let \( \alpha \) be the path from \( a \) to \( c \) disjoint from each arc and curve in \( \mathcal A' \) constructed in the previous case.
Let \( \mathcal A_\alpha \) denote the collection of arcs in \( \mathcal A \) that have nontrivial intersection with \( \alpha \).
Then, there is a component of the regular neighborhood of the union of \( \alpha \) with every arc in \( \mathcal A_\alpha \) that is a path from \( a \) to \( c \) disjoint from every arc and curve of \( \mathcal A \). 
\end{proof}


Given a complete hyperbolic surface \( X \), a hyperbolic surface \( Y \) with quadrangular boundary, and an embedding of \( Y \) into \( X \), we define the \emph{width of \( Y \) relative to \( X \)} to be the quantity
\[
\omega_X(Y) = \inf\{ \ell_X(\alpha) : \alpha\colon\thinspace [0,1] \to X \text{ such that } \alpha(0) \in a, \alpha(1) \in c \text{ or } \alpha(0)\in b, \alpha(1) \in d \},
\]
where \( a, b, c, \) and \( d \) are the closures of the components of the complement of the four marked points in the quandrangular boundary of \( Y \), labelled such that \( a \cap c = \varnothing \). 

In order to adapt the standard definition to our setting, we define the \emph{systole} of a hyperbolic surface to be the infimum of lengths of closed geodesics. 

\begin{proposition}
\label{prop:big-cuffs}
Let \( X \) be a complete hyperbolic surface with positive systole \( \ell \), and let \( Y \) be a hyperbolic surface with quadrangular boundary. 
If there exist an embedding of \( Y \) into \( X \) such that the quadrangular boundary component \( \delta \) of \( Y \) is separating and essential, then any pants decomposition of \( X \) contains a curve of length at least \( \min\{ \ell_X(\widehat \delta), \frac{2}{3}\left( \omega_X(Y)-K\right)\} \), where \( \widehat \delta \) is the unique closed geodesic in \( X \) homotopic to \( \delta \) and \( K = K(\ell) \) is as in Lemma \ref{lem:diameter}.
\end{proposition}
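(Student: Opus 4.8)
The plan is to argue by contradiction: suppose $\mathcal P$ is a pants decomposition of $X$ in which every cuff has length strictly less than $m := \min\{\ell_X(\widehat\delta), \frac{2}{3}(\omega_X(Y) - K)\}$, and derive a contradiction by producing a short path in $X$ crossing $Y$ ``the wrong way'', i.e.\ from $a$ to $c$ or from $b$ to $d$, of length less than $\omega_X(Y)$. Since $\delta$ is essential and separating, it is freely homotopic to the unique closed geodesic $\widehat\delta$, and the curves of $\mathcal P$ can be taken to be geodesics. The strategy is to locate the finitely many cuffs of $\mathcal P$ that ``reach into'' $Y$, bound how far into $Y$ they can penetrate using the diameter bound of Lemma~\ref{lem:diameter}, and then invoke the topological Lemma~\ref{lem:arcs} to find a controlled path from $a$ to $c$ (or $b$ to $d$) avoiding all of them.

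First I would set up the picture inside $Y$. The quadrangular boundary $\delta$ of $Y$ is embedded in $X$; because $\ell_X(\widehat\delta) \geq m$, the assumption on $\mathcal P$ forces no cuff of $\mathcal P$ to be homotopic to $\delta$, so in particular $\widehat\delta$ is disjoint from (or can be isotoped off) every cuff. Consider the cuffs of $\mathcal P$ that intersect $Y$. After an isotopy of $\mathcal P$ (geodesic cuffs meet $\delta$ minimally), the trace of $\mathcal P$ on $Y$ is a collection $\mathcal A$ of disjoint simple closed curves and simple arcs with endpoints on $\delta$; this collection has only finitely many arcs because $\delta$ is compact and the cuffs are disjoint geodesics of bounded length meeting it minimally. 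Now I want to rule out an arc of $\mathcal A$ joining $b$ to $d$ (or symmetrically $a$ to $c$): such an arc would be a subarc of a cuff $\gamma$ of $\mathcal P$, and the two pairs of pants of $\mathcal P$ adjacent to $\gamma$ would together contain a path from $b$ to $d$ (or $a$ to $c$); by the diameter bound in Lemma~\ref{lem:diameter} each such pair of pants has diameter at most $\frac{3}{2}\ell(\gamma) + K < \frac{3}{2}m + K$... here I need to be a little careful, so instead the cleanest route is: the subarc of $\gamma$ in $Y$ plus the portions of $\gamma$ outside, closed up, still gives $\gamma$ of length $< m$, and since $\gamma$ connects a point of $b$ to a point of $d$ going around, one sees that $\omega_X(Y) \leq \ell_X(\gamma) < m \leq \frac{2}{3}(\omega_X(Y)-K)$, which is already absurd for $\omega_X(Y)$ large; more robustly, one bounds a crossing path by $\frac{3}{2}\ell(\gamma) + $ constants using the two adjacent pants and gets $\omega_X(Y) \le \frac{3}{2}m + 2K < \omega_X(Y)$ after unwinding the definition of $m$, a contradiction. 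Hence no arc of $\mathcal A$ connects $b$ to $d$ (and symmetrically none connects $a$ to $c$).

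With $\mathcal A$ a finite family of disjoint essential arcs and curves, none joining $b$ to $d$, Lemma~\ref{lem:arcs} applies (with $S = Y$, or rather the appropriate completion of $Y$) and yields a path $\alpha$ in $Y$ from $a$ to $c$ disjoint from every cuff of $\mathcal P$. Such a path lies entirely in one pair of pants $P$ of $\mathcal P$, so its $X$-length is at most $\diam(P) \leq \frac{3}{2}\ell(\gamma_{\max}) + K < \frac{3}{2}m + K$ where $\gamma_{\max}$ is the longest cuff of $P$. But then $\omega_X(Y) \leq \ell_X(\alpha) < \frac{3}{2}m + K \leq \frac{3}{2}\cdot\frac{2}{3}(\omega_X(Y) - K) + K = \omega_X(Y)$, a contradiction. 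Therefore $\mathcal P$ must contain a cuff of length at least $m$, which is exactly the claimed bound.

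The main obstacle I anticipate is the bookkeeping in the step where I must ensure that a cuff-subarc joining $b$ to $d$ (respectively a path $\alpha$ joining $a$ to $c$) really lives in a single pair of pants so that the intrinsic diameter bound of Lemma~\ref{lem:diameter} can be applied via inequality~\eqref{eq:pants}, together with verifying the finiteness and essentiality hypotheses of Lemma~\ref{lem:arcs} for the trace $\mathcal A$ — in particular that no component of $\mathcal A$ is inessential in $Y$ (which uses that the cuffs are essential geodesics in $X$ and $\delta$ is essential, together with the positive-systole hypothesis to preclude degenerate pieces). Getting the constants to line up so that the factor $\frac{2}{3}$ and the single $K$ emerge cleanly is the only genuinely delicate point; everything else is a direct combination of the diameter lemma, the arc lemma, and the definitions of width and of a pants decomposition.
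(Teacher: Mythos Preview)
Your plan coincides with the paper's: intersect the pants curves with the side of $\delta$ containing $Y$, use Lemma~\ref{lem:arcs} to trap two points on opposite sides of the quadrangle inside a single pair of pants, and then invoke the diameter bound of Lemma~\ref{lem:diameter}. Two steps, however, do not work as written.

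First, you must apply Lemma~\ref{lem:arcs} on the closure $Z$ of the component of $X\smallsetminus\delta$ containing $Y$, not on $Y$. The quadrangular boundary $\delta$ is only one boundary component of $Y$; the remaining ones are totally geodesic curves that may sit in the interior of $X$, and a cuff of $\mathcal P$ can exit $Y$ through one of them. The trace of $\mathcal P$ on $Y$ then contains arcs with an endpoint off $\delta$, and the hypotheses of Lemma~\ref{lem:arcs} fail. On $Z$ every arc of $\mathcal A$ automatically has both endpoints on $\delta$ (cuffs are closed curves in the interior of $X$), which is exactly why the paper passes to $Z$. Your parenthetical ``the appropriate completion of $Y$'' gestures at this, but $Z$ is the object you need. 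Second, the separate paragraph ruling out $b$--$d$ arcs is unnecessary, and the final chain $\omega_X(Y)\le \ell_X(\alpha)\le\diam(P)$ is false: $\alpha$ is only a topological path and can be arbitrarily long. The cleaner route (and the paper's) is the dichotomy already built into Lemma~\ref{lem:arcs}: either some arc of $\mathcal A$ joins $b$ to $d$, or none does and the lemma produces $\alpha$ from $a$ to $c$ avoiding $\mathcal A$. Either way the two endpoints $p,q$ lie in a single pair of pants $P$ (an arc of $\mathcal A$ is a cuff subarc, hence in $\partial P$), and inequality~\eqref{eq:pants} gives $\omega_X(Y)\le d_X(p,q)\le\diam(P)$; now Lemma~\ref{lem:diameter} (or Lemma~\ref{lem:diametercusps} if $P$ has cusps) forces a cuff of $P$ to have length at least $\tfrac{2}{3}(\omega_X(Y)-K)$.
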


\begin{proof}
Fix a pants decomposition of \( X \).
If \( \widehat \delta \) is a curve in the decomposition, then we are finished; so, assume this is not the case.
Let \( Z \) denote the closure of the component of \( X \ssm \delta \) containing \( Y \). 
The result of intersecting each curve in the given pants decomposition with \( Z \) yields a collection \( \mathcal A \) of pairwise-disjoint simple geodesic arcs and simple closed geodesics.
Only finitely many curves in a pants decomposition intersect a given compact set, and hence there are only finitely many arcs in \( \mathcal A \).
Let \( a,b,c \), and \( d \) be the closure of the complementary components of the four marked points in \( \delta \), labelled such that \( a \cap c = \varnothing \).
We can then apply Lemma~\ref{lem:arcs} to see that there is a path in \( Z \) disjoint from the arcs and curves in \( \mathcal A \) connecting either \( b \) and \( d \) or \( a \) and \( c \).
Either way, this path must have length at least \( \omega_X(Y) \). 
Note that this path must be contained in a pair of pants in the decomposition; in particular, this pair of pants must have diameter at least \( \omega_X(Y) \), or if the pair of pants has a cusp, then the associated truncated pair of pants has diameter at least \( \omega_X(Y) \).
In either case, by Lemma~\ref{lem:diameter} or Lemma~\ref{lem:diametercusps}, we can conclude that this pair of pants must have a cuff of length at least \( \frac{2}{3}\left( \omega_X(Y)-K\right) \) as desired. 
\end{proof}

\subsection{Planes with handles with bounded gluings}
\label{sec:bounded_gluings}

Given \( m \in \mathbb N \), let \( S^b_m \) denote an \( m^2 \)-holed square obtained by pasting \( m^2 \) copies of \( R_b \) in an \( m \times m \)-grid.
Let the \emph{outer boundary} of \( S^b_m \) refer to the unique non-smooth boundary component of \( S^b_m \); in particular, \( S^b_m \) is a hyperbolic surface with quadrangular boundary, whose quadrangular boundary component is the outer boundary and whose marked points correspond to the four non-smooth points of the outer boundary. 
We readily see that \( \omega(S^b_m) = 2mb \). 

%
%

\begin{proposition}
\label{prop:bounded-gluing}
If \( X \) is a plane with handles with bounded gluings, then every pants decomposition of \( X \) has unbounded cuff lengths. 
\end{proposition}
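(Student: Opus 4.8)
The plan is to show that any pants decomposition of a plane with handles with bounded gluings must contain arbitrarily long cuffs, by exhibiting, for each $N$, an embedded subsurface with quadrangular boundary whose width relative to $X$ is at least $N$ and whose quadrangular boundary is separating, essential, and has bounded geodesic representative; Proposition~\ref{prop:big-cuffs} then forces a cuff of length at least $\frac{2}{3}(N-K)$.

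\textbf{Setting up the subsurfaces.} First I would fix the embedding of $\Sigma_b$ into $\mathbb R^2$ from Section~\ref{sec:plane_handles}, so that large combinatorial balls in $\Sigma_b$ correspond to large Euclidean squares. Since $X$ is obtained from $\Sigma_b$ by gluing a subset of boundary components in pairs via orientation-reversing isometries, and since the gluings are \emph{bounded}---say every glued pair is within distance $D$ of each other in $X$---I would argue that there is a constant $c=c(b,D)$ such that a path in $X$ crossing from the ``inside'' of a $k\times k$ grid block $S^b_k$ to its ``outside'' must have length at least $ck$. The point is that in $\Sigma_b$ such a crossing path has length at least $2kb$ (this is $\omega(S^b_k)$, computed just above the proposition), and passing through a gluing can only shortcut the path by a bounded amount per gluing, but the combinatorial/topological structure limits how the gluings can help: essentially one uses the quasi-isometry between the thick part of $X$ and a metric on the gluing graph, or more concretely one tracks the image of the path back in $\Sigma_b$ and notes that each ``jump'' across a handle costs at least the length of a boundary geodesic (a fixed positive constant) and there can only be boundedly many useful jumps before the path is forced to traverse a full grid. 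A cleaner route: choose $S^b_m$ with $m$ large, take as the quadrangular surface $Y$ the subsurface of $X$ obtained from $S^b_m$ together with all handles both of whose feet lie inside $S^b_m$; the outer boundary of $S^b_m$ is still a (non-smooth) boundary curve of $Y$, giving $Y$ quadrangular boundary, and $\omega_X(Y)\ge \omega(S^b_m) - (\text{bounded correction}) = 2mb - O(1)$, since any crossing path in $X$, even using handles outside $Y$, must eventually cross the grid $S^b_m$ a net of $m$ times.

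\textbf{Checking the hypotheses of Proposition~\ref{prop:big-cuffs}.} I then need: $X$ has positive systole (immediate, since $X$ has bounded geometry---every plane with handles does, as noted in the text, because it is built from finitely many isometry types of pentagons); the quadrangular boundary component $\delta$ (the outer boundary of $S^b_m$, smoothed to its geodesic representative $\widehat\delta$) is \emph{separating} in $X$ and \emph{essential}. Separating: $\delta$ bounds the compact subsurface $Y$ on one side, so it separates. Essential: $\widehat\delta$ is homotopic to $\delta$, which encloses $m^2$ holes of $\Sigma_b$, so it is not null-homotopic nor boundary-parallel for $m\ge 2$; and $\ell_X(\widehat\delta)$ is bounded \emph{independently of $m$}?---this is the subtle point. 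Actually $\delta$ as a curve in $\Sigma_b$ has length growing like $4mb$, so its geodesic representative could be long. But that is fine: Proposition~\ref{prop:big-cuffs} gives a cuff of length at least $\min\{\ell_X(\widehat\delta), \frac23(\omega_X(Y)-K)\}$, and \emph{both} quantities go to infinity with $m$---$\ell_X(\widehat\delta)\to\infty$ because $\widehat\delta$ separates off a subsurface containing arbitrarily many handles/holes and (using bounded geometry, hence a collar/area lower bound) a short separating curve can only bound a subsurface of bounded complexity on the compact side. So for each target length $N$ pick $m$ so large that both terms exceed $N$; the conclusion follows.

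\textbf{Main obstacle.} The delicate step is the lower bound $\omega_X(Y) \ge 2mb - O(1)$: one must rule out that the bounded gluings allow a short path to ``tunnel'' across the grid. I expect to handle this by projecting any crossing arc $\alpha$ in $X$ back to a path $\tilde\alpha$ in $\Sigma_b$ (lifting through the gluing identifications), observing that $\tilde\alpha$ is a concatenation of sub-arcs in $\Sigma_b$ whose endpoints, where they meet a gluing, sit on glued boundary curves of fixed length $\le 2b$; the union $\tilde\alpha$ still joins the inner to the outer boundary of $S^b_m$ in $\Sigma_b$, so $\ell_{\Sigma_b}(\tilde\alpha)\ge \omega(S^b_m)=2mb$, while $\ell_X(\alpha)\ge \ell_{\Sigma_b}(\tilde\alpha)$ minus the total ``length saved'' at gluings---but no length is saved, since gluing is by isometry, so in fact $\ell_X(\alpha)=\ell_{\Sigma_b}(\tilde\alpha)\ge 2mb$, and one does not even lose the $O(1)$. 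The only care needed is that $\alpha$ might leave $S^b_m$ via an outer-boundary segment and re-enter; a standard surgery argument replaces such excursions without increasing length and without changing which boundary arcs of the quadrangle the endpoints lie on. This makes $\omega_X(Y)\ge 2mb$ outright, and the proposition then yields cuffs of length $\ge \frac23(2mb-K)\to\infty$, completing the proof.
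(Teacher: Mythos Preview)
Your argument has a genuine gap at the separating step. You assert that the outer boundary $\delta$ of $S^b_m$ separates $X$ because ``$\delta$ bounds the compact subsurface $Y$ on one side,'' but this does not follow: your $Y$ (namely $S^b_m$ together with the handles both of whose feet lie inside) has, in addition to $\delta$, further frontier components in the interior of $X$---precisely the circles $\partial_{i,j}\subset S^b_m$ whose glued partner $\partial_{i',j'}$ lies \emph{outside} $S^b_m$. Through any such circle one passes from the interior of $Y$ to $X\setminus Y$ without meeting $\delta$, so $X\setminus\delta$ is connected and $\delta$ is non-separating. Bounded gluings do not rule this out: for any placement of $S^b_m$, circles just inside its outer boundary may be glued to circles just outside. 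This is exactly the obstacle the paper addresses. Its remedy is to enlarge $S_m$ to a slightly bigger square $S_m^J$ (with $J$ the combinatorial gluing bound), absorb every dangling partner into that larger region, and then explicitly construct a \emph{separating} geodesic $\eta_m$ inside $S_m^J$ that encloses all inner holes of $S_m$ together with their partners; the quadrangular subsurface $Y$ is taken to be the side of $\eta_m$ containing those holes.

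A secondary problem is your width estimate. The lift $\tilde\alpha$ of a crossing arc to $\Sigma_b$ is a \emph{disconnected} union of sub-arcs (one for each time $\alpha$ crosses a gluing circle), so it does not ``join $a$ to $c$'' in $\Sigma_b$, and there is no reason its total length should be at least $2mb$; in fact your argument never invokes the bounded-gluings hypothesis, without which the conclusion is false (Example~\ref{ex1} has $\omega_X$ bounded). The paper instead uses the bound $J$ directly: since each handle can only jump at most $J$ grid cells, a path crossing the square must make on the order of $m/J$ honest traversals between distinct boundary circles of $\Sigma_b$, each of length at least the minimum distance $d$ between such circles, which yields $\omega_X(Y)\ge 2d(m/J-2)$.
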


\begin{proof}
Let us first consider the case with no gluings, that is, when \( X = \Sigma_b \) for some \( b \geq \arcsinh(1) \). 
First note that the systole \( \ell \) of \( X \) is positive. 
Moreover, for every \( m \in \mathbb N \), \( X \) contains an isometric and convex copy of \( S^b_m \); in particular, \( \omega_{\Sigma_b}(S^b_m) = \omega(S^b_m) = 2mb \). 
Also note that if \( m > 1 \), then the geodesic in \( X \) homotopic to the outer boundary of \( S^b_m \) has length at least \( 8(m-1)b > 2mb \). 
Hence, by Proposition~\ref{prop:big-cuffs}, given any pants decomposition of \( X \), it contains a curve of length at least \( 2mb-K(\ell) \), and thus the lengths of the cuffs diverge.

Now, let us assume that \( X \) is obtained by gluing boundary components of \( \Sigma = \Sigma_b \) for some \( b > \arcsinh(1) \). 
We will proceed to use the coordinate system on \( \Sigma \) described previously.
%

Let \( \sim \) denote the equivalence relation on \( \mathbb Z^2 \) given by \( (i,j) \sim (i',j') \) if \( \partial_{i,j} \) and \( \partial_{i',j'} \) are identified in \( X \). 
Since \( X \) has bounded gluings, we can define
\[
 J= \max\{ n \in \mathbb N : \text{ there exists } (i,j) \in \mathbb Z^2 \text{ s.t. } (i, j) \sim (i, j+n) \text{ or } (i,j)\sim(i+n,j) \}.
\]

Let \( m \in \mathbb N \) satisfy \( m > 2J \). 
Let \( S_m \subset \Sigma \) denote the intersection of \( \Sigma \) with the square in \( \mathbb R^2 \) with vertices \( (0,0), (2mb, 0), (0,2mb), \) and \( (2mb,2mb) \), so that \( S_m \) is isometric to \( S_m^b \). 
Now, we do not know if the outer boundary of \( S_m \) is separating in \( X \) or not; however, the key to the proof is that the outer boundary is close to a separating curve, as we now describe. 
Let \( S_m^J \subset \Sigma \) be the intersection of \( \Sigma \) with the square in \( \mathbb R^2 \) containing \( S_m \) with side lengths \( 2mb+2J \) and having vertex \( (-J,-J) \). 
It follows that if \( 0 \leq i,j \leq m \) and \( (i,j) \sim (i',j') \), then \( \partial_{i',j'} \subset S_m^J \). 
We can therefore find a separating closed geodesic \( \eta_m \) in \( \Sigma \) contained in \( S_m^J \) and such that every non-outer boundary component of \( S_m \) is on the same side of \( \eta_m \). 
To see this, for each \( (i',j') \in \mathbb Z^2 \) such that \( (i',j') \sim (i,j) \) with \( \partial_{i,j} \subset S_m \) and \( \partial_{i',j'} \not\subset S_m \), choose a path in \( S_m^J \ssm S_m \) connecting the outer boundary of \( S_m \) and \( \partial_{i',j'} \) with the stipulation that any two such paths are disjoint.
The geodesic homotopic to the boundary component of a regular neighborhood of the union of these paths with the outer boundary of \( S_m \) and the \( \partial_{i',j'} \) that are disjoint from \( S_m \) has the desired property.
Before continuing, note that \( \ell_X(\eta_m) > 8(m-1)b \).

Let \( Y \) denote the subsurface of \( X \) bounded by \( \eta_m \) that contains each non-quadrangular boundary component of \( S_m \).
Let \( w,x,y,z \in \eta_m \) be points in the intersection of \( \eta_m \) with the Euclidean diagonals of the square defining \( S_m^J \) such that at exactly one of these points is contained in each of the four \( (J+1) \times (J+1) \)-squares sharing a vertex with \( S_m^J \) and intersecting \( S_m \). 
With these marked points, we can view \( Y \) as a hyperbolic surface with quadrangular boundary.

Let \( d \) denote the minimum distance in \( \Sigma \) between any two of its boundary components. 
Observe that \( \eta_m \) can only intersect \( S_m \) in the copies of \( R_b \) that share a vertex with \( S_m \). 
It follows that, by construction, \( \omega_X(Y) \geq 2d(m/J-2) \).
Let \( \ell \) denote the systole of \( X \), and note that \( \ell > 0 \). 
We can therefore apply Proposition~\ref{prop:big-cuffs} to see that any pants decomposition of \( X \) contains a curve of length at least 
$$
 \min\bigg\{8(m-1)b, \frac{2}{3}\left(2d(m/J-2)-K\right)\bigg\}.
 $$
Since this holds for all \( m > 2J+2 \), we see that any pants decomposition of \( X \) has unbounded cuff lengths. 
\end{proof}

To finish this subsection, we consider a particular example in order to establish the main theorem, Theorem~\ref{thm:qch}. 
Using the notation setup in Proposition~\ref{prop:bounded-gluing}, let \( X \) be a plane with handles such that the induced equivalence relation \( \sim \) on \( \mathbb Z^2 \) is given as follows: \( (2i, j) \sim (2i+1, j) \) for all \( i,j \in \mathbb Z \). 
We then see that the \( \mathbb Z^2 \) action on \( \Sigma_b \) descends to an action on \( X \) by isometries, and moreover, \( \mathbb Z^2 \backslash X \) is a closed surface of genus two.
Every geometric regular cover of a closed surface is QCH \cite[Proposition~2.7]{BCMT}, and hence \( X \) is a QCH plane with handles with bounded gluings.
Our main theorem now follows from applying Proposition~\ref{prop:bounded-gluing} to \( X \):

\begin{theorem}
\label{thm:qch}
There exists a non-simply connected quasiconformally homogenous hyperbolic Riemann surface without a bounded pants decomposition. 
\end{theorem}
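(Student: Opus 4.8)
The plan is to produce an explicit surface that is simultaneously a geometric regular cover of a closed surface—hence QCH by \cite[Proposition~2.7]{BCMT}—and a plane with handles with bounded gluings—hence without a bounded pants decomposition by Proposition~\ref{prop:bounded-gluing}. Concretely, I would fix $b>\arcsinh(1)$ and work with the flute surface $\Sigma_b$ together with its $\mathbb Z^2$-action by isometries coming from the grid of one-holed squares $R_b$. I would then perform the gluings encoded by the equivalence relation $(2i,j)\sim(2i+1,j)$ for all $i,j\in\mathbb Z$: each boundary component $\partial_{2i,j}$ is identified with its right-hand neighbor $\partial_{2i+1,j}$ via an orientation-reversing isometry, and I would choose these identifications equivariantly so that the $\mathbb Z^2$-action descends to the quotient surface $X$.

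The first step is to check that $X$ is a well-defined hyperbolic surface, i.e.\ that the gluing isometries can be chosen compatibly with the $\mathbb Z^2$-action; since the action is transitive on the set of glued pairs (after choosing one representative identification), this is a routine equivariance bookkeeping argument. The second step is to identify the quotient $\mathbb Z^2\backslash X$: one fundamental domain for the $\mathbb Z^2$-action on $\Sigma_b$ consists of a $2\times 1$ block of squares with two boundary components (one from each square), and the descended gluing identifies these two boundary components, producing a genus-two closed surface. Hence $X\to \mathbb Z^2\backslash X$ is a geometric regular cover of a closed genus-two surface, so $X$ is QCH. The third step is to observe that $X$ is by construction a plane with handles (it is obtained from $\Sigma_b$ by pairwise identification of a subset of boundary components) and that its gluings are bounded: $\partial_{2i,j}$ and $\partial_{2i+1,j}$ are adjacent in the grid, so the distance between identified pairs is uniformly bounded (in fact $J=1$ in the notation of Proposition~\ref{prop:bounded-gluing}). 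Applying Proposition~\ref{prop:bounded-gluing} then gives that every pants decomposition of $X$ has unbounded cuff lengths, and in particular $X$ admits no bounded pants decomposition. Since $X$ has handles it is non-simply connected, completing the proof.

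I do not expect a serious obstacle here: all the hard analysis—the diameter bounds for pants (Lemmas~\ref{lem:diameter} and~\ref{lem:diametercusps}), the topological arc lemma (Lemma~\ref{lem:arcs}), the width-to-cuff-length estimate (Proposition~\ref{prop:big-cuffs}), and the reduction for bounded gluings (Proposition~\ref{prop:bounded-gluing})—has already been carried out. The only point requiring a little care is the equivariant choice of gluing isometries ensuring that the quotient by $\mathbb Z^2$ really is a \emph{closed} surface and that the covering is regular and geometric (i.e.\ a covering of metric spaces with deck group $\mathbb Z^2$ acting by isometries); this is the mild technical heart of the argument, but it is a finite check on one fundamental domain. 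Everything else is a direct citation of the results established above.
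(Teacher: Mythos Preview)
Your proposal is correct and follows essentially the same approach as the paper: construct the plane with handles $X$ from $\Sigma_b$ via the gluing relation $(2i,j)\sim(2i+1,j)$, observe that the $\mathbb Z^2$-action descends so that $\mathbb Z^2\backslash X$ is a closed genus-two surface (making $X$ QCH by \cite[Proposition~2.7]{BCMT}), and then invoke Proposition~\ref{prop:bounded-gluing}. The only difference is that you spell out the equivariance of the gluing isometries in slightly more detail than the paper does.
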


\subsection{Planes with handles with unbounded gluings}
\label{sec:unbounded_gluings}

In Proposition~\ref{prop:bounded-gluing}, we saw that every pants decomposition of every plane with handles with bounded gluings has unbounded cuffs.
Here, we will see that if the gluings are unbounded, then there is no such uniform statement: in particular, there are examples of planes with handles with unbounded gluings that have bounded pants decompositions and examples without.

\begin{figure}[h]
\centering
\includegraphics{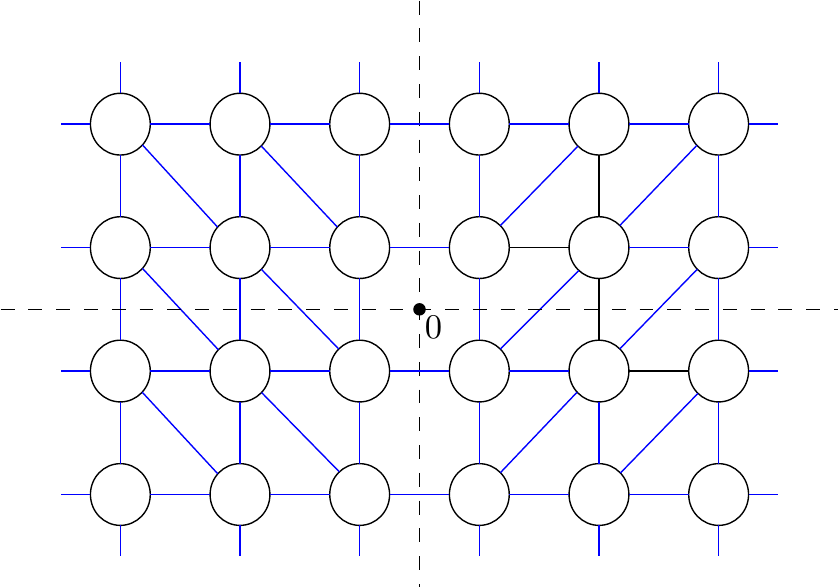}
\caption{The arcs (in blue) close up to give a bounded pants decomposition of \( X_\rho \).}
\label{fig:hexagonal}
\end{figure}

\begin{eg}[Unbounded gluing, bounded pants decomposition]
\label{ex1}
Fix \( b > \arcsinh(1) \). 
Observe that the reflection \( \rho \) of \( \mathbb R^2 \) given by \( (x,y) \mapsto (-x,y) \) restricts to an (orientation-reversing) isometry of \( \Sigma_b \).
Let \( \sim \) be the equivalence relation on \( \Sigma_b \) defined by \( (x,y) \sim (x',y') \) if and only if \( (x',y')=(x,y) \) or \( (x,y) \in \partial \Sigma_b \) and \( (x',y') = (-x,y) \). 
Then, \( X_\rho = \Sigma_b / \sim \) is a plane with handles with unbounded gluings. 
However, \( X_\rho \) has a bounded pants decomposition: such a pants decomposition is depicted in Figure~\ref{fig:hexagonal}.
\end{eg}

\begin{eg}[Unbounded gluing, all pants decompositions unbounded]
\label{ex2}
Fix \( b > \arcsinh(1) \).
Define an equivalence relation \( \approx \) on \( \mathbb Z^2 \) as follows: \( (i,j) \approx (i',j') \) if and only if \( j=j' \), and
\begin{itemize}
\item \( i=i' \), or
\item \( j < 0 \) and \( i'=-i \), or
\item \( j \geq 0\), \( |i-i'|=1 \), and \( \min\{i,i'\} \) is even.
\end{itemize}
Let \( X \) be obtained from \( \Sigma_b \) by identifying \( \partial_{i,j} \) with \( \partial_{i',j'} \) by an orientation-reversing isometry whenever \( (i,j) \approx (i',j') \). 
The portion of \( X \) obtained from the lower half of \( \Sigma_b \) looks like the surface described in Example~\ref{ex1}, and in particular, the second condition guarantees that \( X \) has unbounded gluings.
However, the portion of \( X \) obtained from the upper half of \( \Sigma_b \) looks like the QCH plane with handles described for Theorem~\ref{thm:qch}, and in particular, an identical argument to that of Proposition~\ref{prop:bounded-gluing} applied to this subsurface of \( X \) shows that every pants decomposition has unbounded cuff lengths. 
\end{eg}

\section{Constructions in all topological types}\label{sec:final}

The goal of this section is to show that, given any connected orientable surface \( S \) with compact boundary components and of infinite topological type, there exists a complete hyperbolic metric on \( S \) that has bounded geometry but that does not have a bounded pants decomposition.
In the previous section, we gave a general construction for building such hyperbolic structures on planes with handles; however, topologically, in the empty boundary case, this covers only two distinct surfaces, namely the flute surface and the Loch Ness monster surface.
We now consider the rest.
The construction will split into two cases: first, we will consider finite-genus 2-manifolds whose space of ends is homeomorphic to the union of a Cantor set and a finite discrete set, and then we will consider the rest.
In the second case, we will use the fact that the end space must either have infinitely many isolated planar ends or at least one non-planar end.

\subsection{Finite-type surface minus a Cantor set}

We begin with the case of a sphere minus a Cantor set.
Let \( C \) be a copy of a Cantor set in \( \mathbb S^2 \) and let \( S_C = \mathbb S^2 \ssm C \).
The idea is to write \( S_C \) as a union of flute surfaces, which we individually identify with \( \Sigma_b \) for some \( b > \arcsinh(1) \), and then show that the resulting surface has the desired geometric properties.

To do this, fix a collection \( \{\delta_n\}_{n\in\mathbb N} \) of homotopically nontrivial, pairwise-disjoint, pairwise non-homologous, simple closed curves on \( S_C \) whose images in \( H_1(S_C, \mathbb Z) \) form a basis. 
Let \( \{ S_k \}_{k\in \mathbb N} \) be an enumeration of the closures of the components of the complement of \( \bigcup_{n\in\mathbb N} \delta_n \).
For each \( k \in \mathbb N \), \( S_k \) is topologically a flute surface. 
To see this, note that \( S_k \) is planar, it has infinitely many boundary components, every boundary component is compact, every simple closed curve bounds a compact subsurface, and the surface is one ended.
If the second condition did not hold, then \( S_C \) would either have an isolated end or there would be a linear relation between the curves in homology;
if either of the last two conditions were not satisfied, then the original collection of curves could not have been a homology basis.
The classification of surfaces now guarantees that \( S_k \) is a flute surface. 


We will now endow each of the \( S_k \) with a geometry that will force the surface to not have bounded pants decompositions. 
To do so, we refer to the previous construction via squares (see Figure \ref{fig:one-square}), where the quantities $a,b$ and $c$ are defined as in Section~\ref{sec:plane_handles}. 
We choose $b$ to be such that $c= \frac{1}{2} \arcsinh(1)$, so that the inner cuff length will be $2\,\arcsinh(1)$. Explicitly, using \cite[Theorem~2.3.4]{BuserBook} as before, one chooses $b$ to be such that 
$$
\sinh^2(b)= \cosh\left(\frac{1}{2} \arcsinh(1)\right).
$$
This results in $b$ being exactly
$$
b= \arcsinh\left( \sqrt{\cosh\left(\frac{1}{2} \arcsinh(1)\right)}\right).
$$
In particular, note that \( b > \arcsinh(1) \), and hence, by \cite[Lemma~2.3.5]{BuserBook}, the desired rectangle, \( R_b \), exists. 
We can now equip each \( S_k \) with a hyperbolic metric so that it is isometric to \( \Sigma_b \).

By construction, we can write \( S = \left(\bigsqcup_{n\in \mathbb N} S_k \right) \, / \sim \), where \( \sim \) is an equivalence relation determined by the fact that the \( S_k \) are subsurfaces of \( S_C \).
Now, on a given boundary component, we can realize \( \sim \) as an orientation-reversing isometry, and in doing so, we equip \( S_C \) with a complete hyperbolic metric; let us call the resulting hyperbolic surface \( Z \).

Before working with \( Z \), we need a lemma, which is deduced from \cite[Lemma~4 and its following remark]{Balacheff-Parlier}:

\begin{lemma}
\label{lem:cuff_length}
Let \( F \) be a finite-area planar hyperbolic surface with totally geodesic boundary.
Given a simple closed geodesic $\gamma$ of length at most $2\,\arcsinh(1)$ and a pants decomposition $\mathcal Q$ of \( F \), there exists a pants decomposition containing $\gamma$ whose maximum cuff length is at most the maximum cuff length of $\mathcal Q$.
\end{lemma}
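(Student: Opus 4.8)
The plan is to exploit a simple length/intersection bound for short geodesics: if $\gamma$ is a simple closed geodesic of length at most $2\,\arcsinh(1)$, then by the collar lemma $\gamma$ has an embedded collar of width $\eta(\ell_F(\gamma)) \geq \eta(2\,\arcsinh(1)) = \arcsinh\!\big(1/\sinh(\arcsinh(1))\big) = \arcsinh(1)$, which is comfortably positive. Consequently $\gamma$ cannot cross any other simple closed geodesic too many times, and more relevantly, it cannot cross a short geodesic \emph{at all} without forcing that geodesic to be long. The upshot we actually want is the following: among any pants decomposition $\mathcal{Q}$ of $F$, the curves of $\mathcal{Q}$ that $\gamma$ must cross are controlled, and we can surger $\mathcal{Q}$ along $\gamma$ to remove those crossings without increasing the maximal cuff length. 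This is precisely the content of \cite[Lemma~4 and its following remark]{Balacheff-Parlier}, so the bulk of the work is to check that the hypotheses there are met and to translate the statement into the planar, totally-geodesic-boundary setting.

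First I would recall the relevant statement from \cite{Balacheff-Parlier}: there is a universal constant (which one can take to be $2\,\arcsinh(1)$) such that any simple closed geodesic $\gamma$ of length below this constant can be added to any given pants decomposition $\mathcal{Q}$ at no cost to the maximal cuff length — one replaces $\mathcal{Q}$ by a new pants decomposition $\mathcal{Q}'$ with $\gamma \in \mathcal{Q}'$ and $\max_{c \in \mathcal{Q}'} \ell_F(c) \leq \max_{c \in \mathcal{Q}} \ell_F(c)$. The mechanism is: cut $F$ along $\gamma$, obtaining a (possibly disconnected) surface $F'$ of strictly simpler topology with $\gamma$ appearing as boundary; the curves of $\mathcal{Q}$ that are disjoint from $\gamma$ survive into $F'$, and the ones crossing $\gamma$ get cut into arcs, which one completes to a pants decomposition of each piece of $F'$ using only curves built from these arcs together with pieces of $\gamma$ — and the key length estimate (using the collar of $\gamma$ and the fact that $\gamma$ is short) shows each new curve is no longer than the longest cuff of $\mathcal{Q}$ it came from. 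Reassembling, $\mathcal{Q}' = \{\gamma\} \cup (\text{pants decompositions of the pieces of } F')$ is a pants decomposition of $F$ containing $\gamma$ with the desired length bound.

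Second, I would verify that the finite-area planar hypothesis causes no trouble. Since $F$ is planar with totally geodesic boundary and finite area, it is a finite-type surface (a sphere with finitely many open disks and finitely many punctures removed), so pants decompositions are finite and the cut-and-surger argument terminates; planarity guarantees $\gamma$ is separating, which if anything only simplifies the bookkeeping (cutting along $\gamma$ disconnects $F$ into two planar pieces). The totally geodesic boundary and finite area ensure the curves of $\mathcal{Q}$ are geodesics with well-defined finite lengths and that truncated-pants subtleties with cusps are handled exactly as in \cite{Balacheff-Parlier}. One subtlety worth noting explicitly: if $\gamma$ is already a curve of $\mathcal{Q}$, there is nothing to prove; otherwise $\gamma$ is in minimal position with every curve of $\mathcal{Q}$ and the surgery is carried out along geodesic representatives.

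The main obstacle is not conceptual but bibliographic: the statement we need is stated in \cite{Balacheff-Parlier} for closed surfaces (or surfaces with cusps) and we need it for surfaces with totally geodesic boundary. The honest thing is to observe that a finite-area hyperbolic surface with totally geodesic boundary embeds isometrically (as a convex subsurface) into its double, or into a cusped surface obtained by replacing each boundary geodesic by a cusp while only shrinking distances — but cleaner still is to note that the argument of \cite[Lemma~4]{Balacheff-Parlier} is entirely local to a collar of $\gamma$ and to the combinatorics of cutting, and never uses the absence of boundary. So the plan is to cite the lemma and remark, and add one sentence indicating that the totally-geodesic-boundary case is identical since the proof only uses the collar of $\gamma$ and an induction on topological complexity, both of which are insensitive to the presence of geodesic boundary. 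I expect this translation remark to be the only place requiring care; everything else is a direct invocation.
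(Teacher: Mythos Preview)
Your proposal is correct and matches the paper's own treatment: the paper does not give an independent proof but simply states that the lemma ``is deduced from \cite[Lemma~4 and its following remark]{Balacheff-Parlier}'' and then offers the same rough sketch you do (cut $\mathcal{Q}$ along $\gamma$, use the collar of $\gamma$ to control lengths of the surgered curves). One small remark: the paper stresses that planarity is \emph{crucial} to the Balacheff--Parlier argument (since all simple closed curves are then separating), whereas you describe it as merely simplifying the bookkeeping; it would be worth aligning your phrasing with the fact that the cited result is genuinely stated for punctured spheres and the separating property is used, not just convenient.
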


Roughly speaking, the proof consists of constructing a new pants decomposition using arcs obtained from the curves in $\mathcal Q$ by cutting along $\gamma$.
Now by the collar lemma, geodesic arcs that cut across \( \gamma \) must pick up a definitive, and significant, amount of length, so it is natural to expect that adding \( \gamma \) to the pants decomposition will not increase the maximum length of a cuff in the decomposition.
This is accomplished through careful cut and paste arguments together with length estimates.
We note that it is crucial to the argument that the length of \( \gamma \) is at most \( 2\, \arcsinh(1) \) and that the surface is planar (which forces all simple closed curves to be separating).

\begin{proposition}\label{prop:spherecase}
Every pants decomposition of $Z$ has unbounded cuff lengths.
\end{proposition}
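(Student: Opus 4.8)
The plan is to mimic the proof of Proposition~\ref{prop:bounded-gluing} by exhibiting, inside $Z$, subsurfaces with quadrangular boundary whose width grows without bound, and then invoke Proposition~\ref{prop:big-cuffs}. The obstruction compared to the plane-with-handles case is that $Z$ is not built from a single flute surface $\Sigma_b$, but from countably many pieces $S_k$ glued together along their boundaries in a possibly complicated pattern dictated by the original curves $\{\delta_n\}$ on $S_C$. So a single copy of $\Sigma_b$ sitting isometrically inside $Z$ need not be convex in $Z$, and a curve homotopic to the outer boundary of a large square $S_m^b$ inside one $S_k$ might be short in $Z$ (it could be homotopic into a neighboring piece). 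This is precisely where Lemma~\ref{lem:cuff_length} comes in.

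First I would fix one piece $S_k$ and, using the isometry $S_k \cong \Sigma_b$, locate inside it the family of $m^2$-holed squares $S^b_m$ as in Proposition~\ref{prop:bounded-gluing}, with $\omega(S^b_m) = 2mb$ and outer boundary of geodesic length $\geq 8(m-1)b$. As noted, the outer boundary of $S^b_m$ need not be separating or essential in $Z$, and its geodesic representative in $Z$ might be short. The key point is that every inner cuff of every $S_k$ — i.e.\ every boundary curve along which two pieces are glued — has length exactly $2\arcsinh(1)$ by our choice of $b$. Hence, given any pants decomposition $\mathcal Q$ of $Z$, I would argue that we may enlarge it (without increasing the supremum of cuff lengths) to contain all of these gluing curves: one applies Lemma~\ref{lem:cuff_length} successively to the relevant finite-area planar subsurfaces. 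Indeed $Z$ is exhausted by finite-area planar subsurfaces bounded by gluing curves (since $S_C$ is planar and every simple closed curve on it is separating, and each $S_k$ is a flute), so the lemma applies locally, and a diagonal/limiting argument produces a pants decomposition $\mathcal Q'$ of $Z$ whose maximum cuff length is no larger than that of $\mathcal Q$ and which contains every gluing curve.

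Once $\mathcal Q'$ contains all gluing curves, each piece $S_k$ becomes a union of whole pants of $\mathcal Q'$; in other words the restriction of $\mathcal Q'$ to $S_k$ is a pants decomposition of $S_k \cong \Sigma_b$ in the intrinsic metric. Now the argument of Proposition~\ref{prop:bounded-gluing} in the no-gluing case applies verbatim inside $S_k$: the subsurface $S^b_m$ has width $2mb$ and separating essential outer boundary of length $\geq 8(m-1)b > 2mb$, so Proposition~\ref{prop:big-cuffs} (with $\ell$ the systole of $S_k$, which is positive and uniform since all $S_k$ are isometric to $\Sigma_b$) forces $\mathcal Q'$ restricted to $S_k$ to contain a cuff of length at least $2mb - K(\ell)$. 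Letting $m \to \infty$ shows $\mathcal Q'$ has unbounded cuff lengths, hence so does $\mathcal Q$, as desired.

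The step I expect to be the main obstacle is the passage from ``$\mathcal Q$ is a pants decomposition of $Z$'' to ``there is a pants decomposition $\mathcal Q'$ of $Z$ with no larger maximal cuff length containing all gluing curves.'' Lemma~\ref{lem:cuff_length} is stated for a single finite-area planar surface and one curve at a time, so one must be careful that iterating it over the infinitely many gluing curves — which form a locally finite family — actually converges to a genuine (locally finite) geodesic pants decomposition of $Z$, and that the ``maximum cuff length'' bound survives the limit (replacing maxima by suprema where necessary). Handling this bookkeeping — exhausting $Z$ by finite planar pieces, applying the lemma on each, and checking local finiteness and the length bound in the limit — is the technical heart of the proof; the rest is a direct transcription of earlier arguments.
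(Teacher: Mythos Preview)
Your proposal is correct and follows essentially the same approach as the paper: use Lemma~\ref{lem:cuff_length} iteratively to modify a given pants decomposition so that it contains the boundary curves of a fixed embedded copy of $\Sigma_b$, then invoke Proposition~\ref{prop:bounded-gluing} on that piece. The only difference is that the paper adds merely the boundary curves of \emph{one} chosen $S_k$ rather than all gluing curves of $Z$; this streamlines the limiting step you flag as the main obstacle, since one simply enumerates $\Gamma=\partial S_k$, applies the lemma to the finite planar subsurface $Z_{\gamma_m}$ spanned by the pants curves meeting $\gamma_m$, and takes the direct limit of the resulting sequence $\mathcal P_m$.
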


\begin{proof}
Each copy of \( \Sigma_b \) associated to the \( S_k \) is isometrically embedded in \( Z \).
Let \( \Sigma \) be one such copy.
We denote by $\Gamma$ the set of boundary curves of $\Sigma$; let \( \Gamma = \{\gamma_m\}_{m\in\mathbb N} \) be an enumeration of \( \Gamma \). 
The main step of the proof will be to show that any pants decomposition of $Z$ can be replaced by a pants decomposition that contains $\Gamma$ without increasing the supremum of lengths of the pants decomposition. 
This new pants decomposition will include a pants decomposition of \( \Sigma \), which by Proposition~\ref{prop:bounded-gluing}, necessarily has unbounded cuff lengths.

We begin with a pants decomposition $\mathcal P$ of $Z$. 
For each $\gamma \in \Gamma$, let $\mathcal P_\gamma$ be the (finite) set of curves of $\mathcal P$ that intersect $\gamma$, and let $Z_\gamma$ be the subsurface of $Z$ spanned by $\mathcal P_\gamma$.

We can now apply Lemma~\ref{lem:cuff_length} with \( Z_{\gamma_1}, \mathcal P_{\gamma_1}, \) and \( \gamma_1 \) to obtain a new pants decomposition, say $\mathcal P_1$, of $Z$ containing \( \gamma_1 \) and such that the supremum of lengths of curves in \( \mathcal P_1 \) is no more than that of \( \mathcal P \). 
Now observe that \( \gamma_1 \) is either peripheral to or disjoint from \( Z_{\gamma_2} \).
In particular, we can again use Lemma~\ref{lem:cuff_length} to obtain a pants decomposition, say $\mathcal P_2$, of $Z$ containing both \( \gamma_1 \) and \( \gamma_2 \) and such that the supremum of lengths of curves in \( \mathcal P_2 \) is no more than that of \( \mathcal P \).
Continuing in this fashion for each \( m \in \mathbb N \), we build a pants decomposition \( \mathcal P_m \) of \( Z \) containing \( \gamma_1, \ldots, \gamma_m \) such that the supremum of cuff lengths is at most that of \( \mathcal P \). 
With the setup, we can take the direct limit of these pants decompositions to obtain a pants decomposition of \( Z \) containing each curve in \( \Gamma \) and whose supremum of cuff lengths is at most that of \( \mathcal P \).
Now, as already noted, it follows from Proposition~\ref{prop:bounded-gluing} that \( \mathcal P \) has unbounded cuff lengths.
\end{proof}

We now treat the topological case of a finite-type surface with a Cantor set removed. 
Let \( F \) be a finite-type surface and let \( F' \) be obtained from \( F \) by removing an open disk.
Fix a hyperbolic metric on \( F' \) so that its boundary is totally geodesic and each component has length \( 2\,\arcsinh(1) \).
Now, let \( \Sigma' \) be one of the isometric copies of \( \Sigma_b \) embedded in \( Z \) (so that \( \Sigma' \) corresponds to \( S_k \) for some \( k \in \mathbb N \)), and let \( Z' \) be a connected component of \( Z \ssm \Sigma' \).
Then, topologically, \( Z' \) is a closed disk with a Cantor set removed from its interior.
Let \( Z_F \) be the complete hyperbolic surface obtained as the union of \( F' \) and \( Z' \) with the boundary of \( Z' \) identified with a component of \( \partial F' \) via an orientation-reversing isometry. 
Topologically, \( Z_F \) is homeomorphic to the surface \( F \) with a Cantor set removed.

\begin{proposition}\label{prop:ftcase}
\label{prop:cantor}
Let \( F \) be an orientable finite-type surface. 
Any pants decomposition of $Z_F$ has unbounded cuff lengths.
\end{proposition}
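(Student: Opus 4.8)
The plan is to reduce to Proposition~\ref{prop:spherecase} by the same replacement-of-pants-decomposition strategy used there, the only new feature being the finite-type piece $F'$ glued on. Start with an arbitrary pants decomposition $\mathcal P$ of $Z_F$. The crucial observation is that $\Sigma'$ is an isometrically embedded copy of $\Sigma_b$ inside $Z_F$, all of whose boundary curves $\Gamma=\{\gamma_m\}_{m\in\mathbb N}$ are geodesics of length exactly $2\,\arcsinh(1)$ (by the choice of $b$ made before Lemma~\ref{lem:cuff_length}), hence each $\gamma_m$ has length at most $2\,\arcsinh(1)$. For each $\gamma\in\Gamma$, let $\mathcal P_\gamma$ be the finite set of curves of $\mathcal P$ crossing $\gamma$ and $Z_\gamma$ the subsurface they span; since $\gamma$ is separating in $Z_F$ (every simple closed curve in a genus-zero-boundary Cantor-set complement that sits inside the flute region $\Sigma'$ is separating in $Z_F$ because it is separating in $\Sigma'$ and $\Sigma'$ is separating in $Z_F$), $Z_\gamma$ is a finite-area planar hyperbolic surface with totally geodesic boundary, so Lemma~\ref{lem:cuff_length} applies.

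Next I would iterate exactly as in the proof of Proposition~\ref{prop:spherecase}: apply Lemma~\ref{lem:cuff_length} successively to $(Z_{\gamma_1},\mathcal P\cap Z_{\gamma_1},\gamma_1)$, then to $\gamma_2$ (noting $\gamma_1$ is now peripheral to or disjoint from $Z_{\gamma_2}$), and so on, producing pants decompositions $\mathcal P_m$ of $Z_F$ that contain $\gamma_1,\dots,\gamma_m$ and whose supremum of cuff lengths is no larger than that of $\mathcal P$. Taking the direct limit yields a pants decomposition $\mathcal P_\infty$ of $Z_F$ that contains all of $\Gamma$ and whose supremum of cuff lengths is at most that of $\mathcal P$. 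Since $\mathcal P_\infty$ contains $\Gamma=\partial\Sigma'$, it restricts to a pants decomposition of $\Sigma'\cong\Sigma_b$, which by Proposition~\ref{prop:bounded-gluing} (applied to $\Sigma_b$, a plane with handles with bounded gluings) has unbounded cuff lengths. Hence $\mathcal P_\infty$, and therefore $\mathcal P$, has unbounded cuff lengths.

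The main point to be careful about—what I expect to be the only real obstacle—is verifying that Lemma~\ref{lem:cuff_length} genuinely applies at each stage, i.e.\ that each subsurface $Z_{\gamma_m}$ (after the previous surgeries) is \emph{planar} with totally geodesic boundary and finite area, and that each $\gamma_m$ is a simple closed geodesic of length $\le 2\,\arcsinh(1)$ inside it. The length bound is automatic from the construction of $\Sigma_b$; planarity follows because $Z_{\gamma_m}$ is spanned by finitely many curves lying on one side of a separating curve in the Cantor-set-complement region, so it embeds in a planar surface; finiteness of area follows because only finitely many curves of a pants decomposition meet a given compact set. The presence of the finite-type summand $F'$ does not interfere because all the curves $\gamma_m$ live in $\Sigma'$, which is disjoint from $F'$, so the surgeries never touch $F'$ and never leave the planar part of $Z_F$. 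Once these verifications are in place, the argument is a verbatim repetition of Proposition~\ref{prop:spherecase}.
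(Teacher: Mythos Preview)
There is a genuine gap in the planarity verification. Your claim that ``the surgeries never touch $F'$ and never leave the planar part of $Z_F$'' does not follow from the fact that the curves $\gamma_m$ themselves lie in the flute region. The subsurface $Z_{\gamma_m}$ is spanned by the curves of $\mathcal P$ that \emph{cross} $\gamma_m$, and nothing prevents such a pants curve from being extremely long and wandering into $F'$ before returning across $\gamma_m$. If $F$ has positive genus, $Z_{\gamma_m}$ can therefore carry handles, and Lemma~\ref{lem:cuff_length} (which requires planarity) does not apply. Your separating/planar argument only shows $\gamma_m$ is separating in $Z_F$; it says nothing about the topology of the span of the pants curves hitting it.

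This is exactly the obstacle the paper's proof is designed around, and it is why the paper argues by contradiction rather than directly. Assuming $\mathcal P$ is bounded with $L=\sup_{\delta\in\mathcal P}\ell(\delta)<\infty$, one then chooses a copy $\Sigma$ of $\Sigma_b$ embedded in $Z_F$ at distance strictly greater than $L$ from $F'$ (such copies exist in abundance inside $Z'$, arbitrarily far from its boundary). Now any curve of $\mathcal P$ crossing a boundary component $\gamma$ of $\Sigma$ has length at most $L$, hence cannot reach $F'$; this is what forces $Z_\gamma$ to be planar and makes Lemma~\ref{lem:cuff_length} available. After that the iteration is indeed identical to Proposition~\ref{prop:spherecase}. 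A minor additional point: the specific copy $\Sigma'$ used in the construction of $Z_F$ is \emph{removed} when forming $Z'$, so it is not a subsurface of $Z_F$; you must in any case work with some other embedded copy of $\Sigma_b$ inside $Z'$.
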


\begin{proof}
The proof is very similar to the proof of Proposition~\ref{prop:spherecase}; however, we need to carefully choose \( \Sigma \), an embedded copy of \( \Sigma_b \), that is significantly far away from \( F' \).
As a consequence, unlike the proof of Proposition~\ref{prop:spherecase}, we will argue by contradiction.

Suppose there exists a pants decomposition $\mathcal P$ of $Z_F$ with bounded cuff lengths, and let
$$
L= \sup_{\delta\in \mathcal P} \ell(\delta).
$$
By construction and an application of the collar lemma, we can readily see that there exists an isometric embedding of \( \Sigma_b \) into \( Z_F \) whose image \( \Sigma \) (corresponding to one of the \( S_k \)) has  distance  strictly greater than \( L \) from \( F' \).
In particular, given any boundary component \( \gamma \) of \( \Sigma \), the surface spanned by the curves in \( \mathcal P \) intersecting \( \gamma \) is disjoint from \( F' \), and hence must be planar. 
This allows us to proceed as in the proof of Proposition \ref{prop:spherecase} and construct a bounded pants decomposition of \( \Sigma \), which contradicts Proposition~\ref{prop:bounded-gluing}; hence, we can conclude that \( \mathcal P \) has unbounded cuff lengths.
\end{proof}

\subsection{The general setup}

 Let \(S \)  be an infinite-type surface that cannot be obtained by removing a Cantor set from a finite-type surface (this case was considered previously). 
We give a general outline for constructing a complete hyperbolic metric on \( S \) with bounded geometry and such that there are no bounded pants decompositions.

First,  decompose $S$ into flute surfaces by cutting along a collection of appropriate curves \cite[Lemma~3.2]{McCleayParlier}. Each flute surface has a single topological end, which corresponds to a unique end of $S$ (the set of such ends is dense in the end space of \( S \)).

 Next fix \( b>\arcsinh(1) \), and call one of the flute surfaces 
 \(Q_1\). Identify  each  flute surface in the decomposition other than  \(Q_1 \)  with the hyperbolic flute surface \(\Sigma_b \).  On \( Q_1 \) delete a countable sequence of (open) disks with pairwise-disjoint closures such that only finitely many of the disks intersect any given compact set---this surgered surface is again a flute surface. 
Next identify \(\Sigma_b \) with the surgered \( Q_1 \) by assigning the upper-half boundary curves of 
\(\Sigma_b \) with the boundaries of the deleted disks in 
\( Q_1 \). Identify  the boundary curves of the original flute surface 
\( Q_1 \)  with the lower half boundary curves of 
\(\Sigma_b \) in any way.  Denote this hyperbolic flute surface 
by \( Q_1' \) and continue to denote  its boundary curves,
as we did  in Section~\ref{sec:planes}, by \( \partial_{i,j} \).

By the topological assumptions on \( S \), namely that it is not homeomorphic to a finite-type surface with a Cantor set removed, we may assume without loss of generality that the end of \( S \) corresponding to the end of \( Q_1 \) is either non-planar; planar and accumulated by boundary components; or neither, in which case it is planar and accumulated by isolated planar ends. 
The boundary gluing data for the lower half curves of \( Q_1' \)  is determined since they come from the boundary curves of \( Q_1 \).  
Depending on the topology of the end of 
\( Q_1 \), we modify the surgered flute surface \( Q_1' \) in the following way:

\begin{itemize}

\item   If the end of \( Q_1 \) is non-planar, for 
\( j > 0 \) identify \( \partial_{2i,j} \) and \( \partial_{2i+1,j} \) of \( Q_1' \)  via orientation-reversing isometries for all \( i \in \mathbb Z \); 

\item  if the end is non-planar and accumulated by boundary components, then simply leave the boundary components as they are;

\item otherwise, for each \( \partial_{i,j} \) with \( j > 0 \), take a copy of the unique pair of pants with two cusps and a boundary component of length \( b \), and identify its boundary with \( \partial_{i,j} \) via an orientation-reversing isometry. 
 
\end{itemize}

 Reassembling all of the flute surfaces along with the modified flute surface \( Q_1' \),  we obtain a complete hyperbolic surface with the topology of the original surface $S$. 

Finally, there exist ``large squares" (quadrangular subsurfaces of arbitrarily large width) whose distinguished boundary curve is a square in the upper half-plane of the modified \( Q_1'\). These quadrangular subsurfaces come from the large isometric embeddings of \( S_m^b \) in the upper half of \( Q_1' \)---just as in the proof of Proposition~\ref{prop:bounded-gluing}.
The large width of these embeddings can be guaranteed by embedding them arbitrarily far away from the lower half of \( Q_1' \). 
Hence, by Proposition~\ref{prop:big-cuffs}, every pants decomposition of \( X \) must have unbounded cuff lengths.
Now by construction, \( X \) has bounded geometry, and hence we have established:

\begin{proposition}
\label{prop:no-cantor}
Let \( S \) be an orientable connected surface of infinite topological type and with compact boundary components. 
If either \( S \) has infinite genus or the end space of \( S \) is not homeomorphic to a Cantor set union a finite discrete set, then there exists a complete hyperbolic structure on \( S \) with bounded geometry and such that every pants decomposition has unbounded cuff lengths. 
\end{proposition}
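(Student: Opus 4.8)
The strategy is to reduce the general statement to the special cases that have already been handled in Section~\ref{sec:planes} and in Proposition~\ref{prop:spherecase} and Proposition~\ref{prop:cantor}, by building an explicit hyperbolic structure from the pieces described in the general setup above. First I would invoke the decomposition of $S$ into flute surfaces by cutting along a suitable locally finite family of curves (as in \cite[Lemma~3.2]{McCleayParlier}), choosing this family so that one distinguished flute piece, call it $Q_1$, has its single topological end equal to a preferred end $\xi$ of $S$. The topological hypothesis—that $S$ is \emph{not} a finite-type surface with a Cantor set removed—is exactly what lets me choose $\xi$ so that $\xi$ is non-planar, or planar and accumulated by boundary components, or planar and accumulated by isolated planar ends; this trichotomy drives the three bullet cases in the construction. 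On each flute piece other than $Q_1$ I put the metric making it isometric to $\Sigma_b$ for a fixed $b>\arcsinh(1)$; on $Q_1$ I first delete a locally finite sequence of open disks (obtaining again a flute surface), realize the surgered $Q_1$ as $\Sigma_b$ with its \emph{upper}-half boundary curves matched to the deleted-disk boundaries, glue in the original boundary curves of $Q_1$ along the lower-half curves in an arbitrary orientation-reversing way, and finally modify the upper-half curves $\partial_{i,j}$ ($j>0$) according to which case of the trichotomy we are in (genus-two-type gluings, no gluing, or capping with twice-cusped pants). Reassembling all the pieces along the identifications dictated by the original embedding into $S$ gives a complete hyperbolic surface $X$ homeomorphic to $S$.

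The next step is to check that $X$ has bounded geometry. All building blocks—the $\Sigma_b$ pieces, the collar regions of the gluing curves, and the twice-cusped pants used in the capping case—have injectivity radius bounded above and below away from cusp horoballs by quantities depending only on $b$, and there are only finitely many geometric types of gluing; since the collar lemma controls the injectivity radius near each gluing curve uniformly in terms of its (fixed) length, the global bounds follow. In particular $\operatorname{sys}(X)>0$, so the hypothesis of Proposition~\ref{prop:big-cuffs} is met.

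Finally I would produce the quadrangular subsurfaces of arbitrarily large width. Exactly as in the proof of Proposition~\ref{prop:bounded-gluing}, inside the upper half of the modified $Q_1'$ sits an isometric copy of $S_m^b$ for every $m$, with $\omega(S_m^b)=2mb$; by choosing this copy far enough (in the intrinsic metric of $X$) from the lower half of $Q_1'$ and from the caps, any path in $X$ realizing the relative width $\omega_X(S_m^b)$ cannot take a shortcut through the glued-up region, so $\omega_X(S_m^b)\to\infty$ as $m\to\infty$; moreover, just as before, a curve in $X$ homotopic to the outer boundary of this copy is either already of length comparable to $mb$ or can be replaced by a nearby separating curve of such length (the argument of Proposition~\ref{prop:bounded-gluing} with its perturbed curve $\eta_m$). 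Applying Proposition~\ref{prop:big-cuffs} to $Y=S_m^b\subset X$ shows that every pants decomposition of $X$ contains a cuff of length at least $\min\{\,\ell_X(\widehat\delta),\tfrac23(\omega_X(S_m^b)-K(\ell))\,\}$, and letting $m\to\infty$ forces unbounded cuff lengths, completing the proof.

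\textbf{Main obstacle.} The only genuinely delicate point is verifying that the relative width $\omega_X(S_m^b)$ really does diverge: one must rule out the possibility that a curve from $a$ to $c$ (or $b$ to $d$) escapes the large square, runs cheaply through the identified boundary curves or through the lower-half gluings of $Q_1'$ and/or the cusped caps, and comes back, thereby keeping the relative width bounded. This is handled by the same separating-curve trick as in Proposition~\ref{prop:bounded-gluing}—placing the marked points $w,x,y,z$ on a genuinely separating geodesic $\eta_m$ close to the outer square and observing that $\eta_m$ can only meet the square in the corner copies of $R_b$—but checking that the required separating curve exists in each of the three topological cases, and that its length grows like $mb$, is where the bookkeeping lies.
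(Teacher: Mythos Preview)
Your proposal is correct and follows essentially the same approach as the paper: both build the metric from the flute decomposition with the modified piece $Q_1'$, verify bounded geometry from the finitely many building-block types, and then apply Proposition~\ref{prop:big-cuffs} to the copies of $S_m^b$ sitting deep in the upper half of $Q_1'$ (invoking the separating-curve perturbation $\eta_m$ from Proposition~\ref{prop:bounded-gluing} where needed). Your treatment of the ``main obstacle'' is in fact a bit more explicit than the paper's, which simply asserts that placing the squares far from the lower half guarantees large width; one minor slip is your remark about choosing the square ``far from the caps''---in the third case the twice-cusped pants are attached to \emph{every} upper-half boundary, so you cannot avoid them, but since they are cul-de-sacs they provide no shortcuts and the width argument is unaffected.
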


By the classification of orientable connected surfaces with compact boundary components, Proposition~\ref{prop:cantor} and Proposition~\ref{prop:no-cantor} establish the second main theorem:

\begin{theorem}
\label{thm:main2}
Every infinite-type orientable connected topological surface with compact boundary components admits a complete hyperbolic metric with bounded geometry and such that every pants decomposition has unbounded cuff lengths.
\end{theorem}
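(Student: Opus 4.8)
The plan is to combine the two structural results already assembled in the paper, Proposition~\ref{prop:cantor} and Proposition~\ref{prop:no-cantor}, via the Kerékjártó--Richards classification of surfaces. Recall that an orientable connected surface with compact boundary components is determined up to homeomorphism by its genus (possibly infinite), its number of boundary components, and the pair consisting of its end space $E$ together with the closed subset $E_{\mathrm{np}} \subseteq E$ of ends accumulated by genus (the non-planar ends). The surface is of infinite type precisely when the genus is infinite, the number of boundary components is infinite, or $E$ is infinite.

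First I would dispose of the cases covered by Proposition~\ref{prop:no-cantor}: if $S$ has infinite genus, or if $E$ is not homeomorphic to a Cantor set union a finite discrete set, then that proposition directly produces a complete hyperbolic metric with bounded geometry and no bounded pants decomposition, and we are done. So I may assume $S$ has finite genus and $E \cong C \sqcup D$ with $C$ a Cantor set and $D$ finite and discrete. Note that $E_{\mathrm{np}} = \varnothing$ in this situation: a non-planar end would force infinite genus. The remaining task is to observe that any such $S$ is homeomorphic to $F$ with a Cantor set removed, for some orientable finite-type surface $F$, so that Proposition~\ref{prop:cantor} applies to $Z_F$.

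To see this, let $F$ be the orientable surface of the same (finite) genus as $S$, with the same number of boundary components as $S$ (finite or countably infinite --- compact boundary components form a discrete set, and if there are infinitely many of them they accumulate onto ends, which is harmless), and with end space the finite discrete set $D$; concretely, $F$ can be taken to be a closed surface of that genus with $|D|$ points removed, possibly with finitely or countably many open disks removed to account for the boundary. Removing a tame Cantor set $C'$ from the interior of $F$ (embedded so it avoids a neighborhood of each puncture and each boundary component) yields a surface whose genus is still that of $F$, whose boundary components are unchanged, whose set of non-planar ends is empty, and whose end space is the one-point-per-$D$-element set together with the Cantor set of ends created by $C'$, i.e.\ $D \sqcup C' \cong D \sqcup C \cong E$. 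By the classification, $F \ssm C' \cong S$. Finally, the construction of $Z_F$ in the paragraph before Proposition~\ref{prop:cantor} attaches $F'$ (a finite-type piece) to $Z'$ (a disk minus a Cantor set) and is easily seen to accommodate the extra boundary components of $S$: one simply takes $F$ itself to already be the finite-type surface $S$-minus-a-Cantor-set's ``finite model'' with its boundary, so $Z_F \cong S$ with the asserted hyperbolic metric, having bounded geometry (all pieces are built from $\Sigma_b$ and fixed pairs of pants) and, by Proposition~\ref{prop:cantor}, no bounded pants decomposition.

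The main obstacle is bookkeeping rather than mathematics: one must make sure the finite-type surface $F$ fed into the construction $Z_F$ is chosen so that $Z_F$ reproduces exactly the homeomorphism type of $S$ --- same genus, same boundary components, same end space with the same non-planar subset --- and that the two propositions between them genuinely exhaust all infinite-type $S$. The classification of surfaces handles the exhaustion, while the flexibility in choosing $F$ (number of boundary components, punctures) handles the matching; the only subtlety is checking that adding a Cantor set of ends to a finite discrete end space cannot accidentally create non-planar ends, which holds because $S$ has finite genus. Once these identifications are in place, the metric and its two properties are inherited verbatim from Proposition~\ref{prop:cantor}, and together with Proposition~\ref{prop:no-cantor} this yields Theorem~\ref{thm:main2}.
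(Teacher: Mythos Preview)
Your approach is the paper's: the proof of Theorem~\ref{thm:main2} there is a single sentence invoking the classification of surfaces to say that Proposition~\ref{prop:cantor} and Proposition~\ref{prop:no-cantor} together exhaust all infinite-type surfaces, and you carry out exactly this with more bookkeeping.

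One wrinkle in your writeup is worth flagging. You allow the finite-type model \( F \) to carry ``finite or countably infinite'' boundary components, but if \( F \) has infinitely many boundary components it is no longer finite-type, and Proposition~\ref{prop:cantor} (and the construction of \( Z_F \)) is stated only for finite-type \( F \); your sentence ``one simply takes \( F \) itself to already be the finite-type surface'' is then self-contradictory. Concretely, a surface \( S \) with finite genus, end space homeomorphic to a Cantor set union a finite discrete set, and infinitely many compact boundary components is \emph{not} of the form finite-type-minus-Cantor-set and is not covered by the literal hypotheses of either proposition. The correct routing is through the general construction preceding Proposition~\ref{prop:no-cantor}: that setup is written for any infinite-type \( S \) not obtainable as a finite-type surface minus a Cantor set, and one of the three bullet points there explicitly treats an end that is planar and accumulated by boundary components. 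So rather than trying to stretch Proposition~\ref{prop:cantor} to accommodate infinitely many boundary components, send that case to Proposition~\ref{prop:no-cantor} and note that its proof (if not its displayed hypothesis) already handles it.
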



{\it Addresses:}\\ The Graduate Center \& Hunter College, CUNY, N.Y., N.Y., USA\\
Department of Mathematics, University of Luxembourg, Esch-sur-Alzette, Luxembourg\\
Queens College, CUNY, Flushing, N.Y., USA

{\it Emails:}\\
abasmajian@gc.cuny.edu\\
hugo.parlier@uni.lu\\
nicholas.vlamis@qc.cuny.edu

\end{document}